 \newtheorem{theorem}{Theorem}[section]
 \newtheorem{lemma}{Lemma}
 \newtheorem{proposition}{Proposition}
 \newtheorem{definition}{Definition}[section]
 \numberwithin{equation}{section} 
\begin{document}

\begin{center}

\begin{title}
\title{\bf\Large{{On a Nabla Fractional Boundary Value Problem with General Boundary Conditions}}}
\end{title}

\vskip 0.25 in

\begin{author}
\author {Jagan Mohan Jonnalagadda\footnote[1]{Department of Mathematics, Birla Institute of Technology and Science Pilani, Hyderabad - 500078, Telangana, India. email: {j.jaganmohan@hotmail.com}}}
\end{author}

\end{center}

\vskip 0.25 in

\noindent{\bf Abstract:} In this article, we consider a nabla fractional boundary value problem with general boundary conditions. Brackins \& Peterson \cite{Br} gave an explicit expression for the corresponding Green's function. Here, we show that this Green's function is nonnegative and obtain an upper bound for its maximum value. Since the expression for the Green's function is complicated, derivation of its properties may not be straightforward. For this purpose, we use a few properties of fractional nabla Taylor monomials. Using the Green's function, we will then develop a Lyapunov-type inequality for the nabla fractional boundary value problem.

\vskip 0.25 in

\noindent{\bf Key Words:} Nabla fractional difference, boundary value problem, general boundary conditions, Green's function, Lyapunov-type inequality

\vskip 0.25 in

\noindent{\bf AMS Classification:} 34A08, 34B05, 26D15, 39A10, 39A12.

\vskip 0.25 in

\section{Introduction}

Let $a$, $b \in \mathbb{R}$ with $b - a \in \mathbb{N}_{1}$. Consider the homogeneous nabla fractional boundary value problem with general boundary conditions:
\begin{equation}\label{Gen Hom}
\begin{cases}
-\big{(}\nabla^{\nu}_{a}u\big{)}(t) = 0, \quad t \in \mathbb{N}^{b}_{a + 2},\\
\alpha u(a + 1) - \beta (\nabla u)(a + 1) = 0,\\
\gamma u(b) + \delta (\nabla u)(b) = 0,
\end{cases}
\end{equation}
where $1 < \nu < 2$, $\alpha^{2} + \beta^{2} > 0$ and $\gamma^{2} + \delta^{2} > 0$. Brackins \& Peterson \cite{Br} proved that the boundary value problem \eqref{Gen Hom} has only the trivial solution if, and only if 
\begin{equation}\label{Condition}
\xi = (\beta - \alpha)\gamma + \alpha \gamma H_{\nu - 1}(b, a) + \alpha \delta H_{\nu - 2}(b, a) \neq 0.
\end{equation}

In the following theorem, Brackins \& Peterson \cite{Br} gave an explicit expression for its Green's function.

\begin{theorem}[See \cite{Br}]
Assume \eqref{Condition} holds. The Green's function for the boundary value problem \eqref{Gen Hom} is given by
\begin{equation}\label{Gen Green}
G(t, s) = \begin{cases}
u(t, s), \quad t \leq s - 1,\\
v(t, s), \quad t \geq s,
\end{cases}
\end{equation}
where
\begin{multline}\label{Gen Green U}
u(t, s) = \frac{1}{\xi}\Big{[}\alpha \gamma H_{\nu - 1}(t, a)H_{\nu - 1}(b, \rho(s)) + \alpha \delta H_{\nu - 1}(t, a)H_{\nu - 2}(b, \rho(s)) \\ + (\beta - \alpha) \gamma H_{\nu - 1}(b, \rho(s)) + (\beta - \alpha) \delta H_{\nu - 2}(b, \rho(s))\Big{]},
\end{multline}
and
\begin{equation}\label{Gen Green V}
v(t, s) = u(t, s) - H_{\nu - 1}(t, \rho(s)).
\end{equation}
\end{theorem}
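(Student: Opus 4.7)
The approach is constructive: solve the non-homogeneous analogue $-(\nabla^{\nu}_{a}u)(t) = h(t)$ on $\mathbb{N}^{b}_{a+2}$ subject to the given boundary data, and then read off $G(t,s)$ from the representation $u(t) = \sum_{s=a+2}^{b} G(t,s)\,h(s)$. The analytic inputs I would rely on are the power rule $\nabla H_{\mu}(t,a) = H_{\mu-1}(t,a)$, the fact that the homogeneous kernel of $\nabla^{\nu}_{a}$ (for $1<\nu<2$) is two-dimensional and spanned by $H_{\nu-1}(\cdot,a)$ and $H_{\nu-2}(\cdot,a)$, and the endpoint values $H_{\mu}(a+1,a) = 1$, $H_{\nu-1}(t,\rho(t)) = 1$.

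First, applying the $\nu$-th order fractional sum $\nabla^{-\nu}_{a}$ to the equation yields the general solution
\[
u(t) = c_{1}\,H_{\nu-1}(t,a) + c_{2}\,H_{\nu-2}(t,a) - \sum_{s=a+1}^{t} H_{\nu-1}(t,\rho(s))\,h(s),
\]
with two undetermined constants. I would then impose the left boundary condition $\alpha u(a+1) - \beta(\nabla u)(a+1) = 0$; because the convolution sum (and its nabla) vanishes at $t = a+1$, while $H_{\nu-1}(a+1,a)=H_{\nu-2}(a+1,a)=1$, this collapses to a clean linear relation between $c_{1}$ and $c_{2}$ involving only $\alpha, \beta$. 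The right condition $\gamma u(b) + \delta(\nabla u)(b) = 0$ then produces a second equation whose homogeneous part has determinant exactly $\xi$ as in \eqref{Condition}, confirming that $\xi \neq 0$ is precisely the solvability hypothesis.

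Solving the resulting $2\times 2$ system expresses $c_{1}$ and $c_{2}$ as explicit linear functionals of $h$. Substituting back and collecting the coefficient of each $h(s)$, I would split the result at $s$ versus $t$: for $t \leq s-1$ the index $s$ lies outside the convolution sum and the coefficient of $h(s)$ becomes exactly $u(t,s)$ in \eqref{Gen Green U}; for $t \geq s$ the extra contribution $-H_{\nu-1}(t,\rho(s))$ from the sum is subtracted, yielding $v(t,s) = u(t,s) - H_{\nu-1}(t,\rho(s))$ as in \eqref{Gen Green V}.

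I expect the main obstacle to be bookkeeping rather than conceptual depth. In particular, computing $(\nabla u)(b)$ correctly requires a discrete Leibniz step on the variable-upper-limit sum, which contributes the endpoint term $h(b)\cdot H_{\nu-1}(b,\rho(b)) = h(b)$ alongside $\sum_{s=a+1}^{b} H_{\nu-2}(b,\rho(s))h(s)$; these must be tracked so that the numerators of $c_{1}, c_{2}$ exactly reproduce the four products $\alpha\gamma\,H_{\nu-1}H_{\nu-1}$, $\alpha\delta\,H_{\nu-1}H_{\nu-2}$, $(\beta-\alpha)\gamma\,H_{\nu-1}$, $(\beta-\alpha)\delta\,H_{\nu-2}$ appearing in \eqref{Gen Green U}. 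A careful ledger of the identities above, together with consistent use of $\rho(s) = s-1$, should make the match transparent.
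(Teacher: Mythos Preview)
The paper does not prove this theorem: it is quoted from Brackins' thesis \cite{Br} under the heading ``See \cite{Br}'' and no argument is supplied in the paper itself. So there is nothing here to compare your proposal against directly. Your constructive route---write the general solution of $-\nabla^{\nu}_{a}u = h$ as $c_{1}H_{\nu-1}(t,a) + c_{2}H_{\nu-2}(t,a)$ minus the fractional-sum particular solution, impose the two boundary conditions to obtain a $2\times 2$ linear system whose determinant is $\xi$, solve, and read off the kernel in $h(s)$---is the standard derivation and is essentially how Brackins obtains the formula in the cited thesis.

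One small inconsistency in your ledger is worth flagging. You write the final representation with lower limit $s=a+2$, but the paper (Theorem~\ref{Unique}) takes $h:\mathbb{N}^{b}_{a+1}\to\mathbb{R}$ and sums from $s=a+1$. Correspondingly, if your particular solution is $-\sum_{s=a+1}^{t}H_{\nu-1}(t,\rho(s))h(s)$ then at $t=a+1$ it equals $-h(a+1)$, not zero, because $H_{\nu-1}(a+1,a)=1$; the same remark applies to its nabla. This does not break the method---the $h(a+1)$ contribution is carried through the left boundary condition and ends up producing the $s=a+1$ column of $G$ (which lies in the $v$-branch for every $t\ge a+1$)---but it must be tracked rather than discarded. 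Aligning your summation limits with the paper's conventions is the only real hazard in an otherwise sound plan.
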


We show that this Green's function is nonnegative and obtain an upper bound for its maximum value. Using the Green's function, we will then develop a Lyapunov-type inequality for the nabla fractional boundary value problem
\begin{equation}\label{Gen Non Hom}
\begin{cases}
\big{(}\nabla^{\nu}_{a}u\big{)}(t) + q(t)u(t) = 0, \quad t \in \mathbb{N}^{b}_{a + 2},\\
\alpha u(a + 1) - \beta (\nabla u)(a + 1) = 0,\\
\gamma u(b) + \delta (\nabla u)(b) = 0,
\end{cases}
\end{equation}
where $q : \mathbb{N}^{b}_{a + 1} \rightarrow \mathbb{R}$.

\section{Preliminaries}

We shall use the following notations, definitions and known results of nabla fractional calculus throughout the article. Denote by $\mathbb{N}_{a} : = \{a, a + 1, a + 2, \ldots\}$ and $\mathbb{N}^{b}_{a} : = \{a, a + 1, a + 2, \ldots, b\}$ for any $a$, $b \in \mathbb{R}$ such that $b - a \in \mathbb{N}_{1}$.

\begin{definition}[See \cite{Bo}]
The backward jump operator $\rho : \mathbb{N}_{a} \rightarrow \mathbb{N}_{a}$ is defined by $$\rho(t) = \begin{cases} a, \hspace{0.43 in} t = a, \\ t - 1, \quad t \in \mathbb{N}_{a + 1}.\end{cases}$$
\end{definition}

\begin{definition}[See \cite{Ki, Po}] The Euler gamma function is defined by $$\Gamma (z) : = \int_0^\infty t^{z - 1} e^{-t} dt, \quad \Re(z) > 0.$$ Using its well-known reduction formula, the Euler gamma function can be extended to the half-plane $\Re(z) \leq 0$ except for $z \in \{\ldots, -2, -1, 0\}$. 
\end{definition}

\begin{definition}[See \cite{Go}]
For $t \in \mathbb{R} \setminus \{\ldots, -2, -1, 0\}$ and $r \in \mathbb{R}$ such that $(t + r) \in \mathbb{R} \setminus \{\ldots, -2, -1, 0\}$, the generalized rising function is defined by
\begin{equation}
\nonumber t^{\overline{r}} = \frac{\Gamma(t + r)}{\Gamma(t)}.
\end{equation}
Also, if $t \in \{\ldots, -2, -1, 0\}$ and $r \in \mathbb{R}$ such that $(t + r) \in \mathbb{R} \setminus \{\ldots, -2, -1, 0\}$, then we use the convention that $t^{\overline{r}} : = 0$.
\end{definition}

\begin{definition}[See \cite{Go}]
Let $\mu \in \mathbb{R} \setminus \{\ldots, -2, -1\}$. Define the $\mu^{th}$-order nabla fractional Taylor monomial by $$H_{\mu}(t, a) = \frac{(t - a)^{\overline{\mu}}}{\Gamma(\mu + 1)},$$ provided the right-hand side exists. Observe that $H_{\mu}(a, a) = 0$ and $H_{\mu}(t, a) = 0$ for all $\mu \in \{\ldots, -2, -1\}$ and $t \in \mathbb{N}_{a}$.
\end{definition}

\begin{definition}[See \cite{Bo}]
Let $u: \mathbb{N}_{a} \rightarrow \mathbb{R}$ and $N \in \mathbb{N}_1$. The first order backward (nabla) difference of $u$ is defined by $$\big{(}\nabla u\big{)}(t) : = u(t) - u(t - 1), \quad t \in \mathbb{N}_{a + 1},$$ and the $N^{th}$-order nabla difference of $u$ is defined recursively by $$\big{(}{\nabla}^{N}u\big{)}(t) : = \Big{(}\nabla\big{(}\nabla^{N - 1}u\big{)}\Big{)}(t), \quad t\in \mathbb{N}_{a + N}.$$
\end{definition}

\begin{definition}[See \cite{Go}]
Let $u: \mathbb{N}_{a + 1} \rightarrow \mathbb{R}$ and $N \in \mathbb{N}_1$. The $N^{\text{th}}$-order nabla sum of $u$ based at $a$ is given by
\begin{equation}
\nonumber \big{(}\nabla ^{-N}_{a}u\big{)}(t) : = \sum^{t}_{s = a + 1}H_{N - 1}(t, \rho(s))u(s), \quad t \in \mathbb{N}_{a},
\end{equation}
where by convention $\big{(}\nabla ^{-N}_{a}u\big{)}(a) = 0$. We define $\big{(}\nabla ^{-0}_{a}u\big{)}(t) = u(t)$ for all $t \in \mathbb{N}_{a + 1}$.
\end{definition}

\begin{definition}[See \cite{Go}]
Let $u: \mathbb{N}_{a + 1} \rightarrow \mathbb{R}$ and $\nu > 0$. The $\nu^{\text{th}}$-order nabla sum of $u$ based at $a$ is given by
\begin{equation}
\nonumber \big{(}\nabla ^{-\nu}_{a}u\big{)}(t) : = \sum^{t}_{s = a + 1}H_{\nu - 1}(t, \rho(s))u(s), \quad t \in \mathbb{N}_{a},
\end{equation}
where by convention $\big{(}\nabla ^{-\nu}_{a}u\big{)}(a) = 0$.
\end{definition}

\begin{definition}[See \cite{Go}]
Let $u: \mathbb{N}_{a + 1} \rightarrow \mathbb{R}$, $\nu > 0$ and choose $N \in \mathbb{N}_1$ such that $N - 1 < \nu \leq N$. The $\nu^{\text{th}}$-order nabla difference of $u$ is given by
\begin{equation}
\nonumber \big{(}\nabla ^{\nu}_{a}u\big{)}(t) : = \Big{(}\nabla^N\big{(}\nabla_{a}^{-(N - \nu)}u\big{)}\Big{)}(t), \quad t\in\mathbb{N}_{a + N}.
\end{equation}
\end{definition}

The following properties of gamma function, generalized rising function, and fractional nabla Taylor monomial will be used in Section 3.

\begin{proposition}[See \cite{Go}]\label{Gamma}
Assume the following generalized rising functions and fractional nabla Taylor monomials are well defined. 
\begin{enumerate}
\item $\Gamma(t) > 0$ for $t > 0$, and $\Gamma(t) < 0$ for $-1 < t < 0$.
\item $t^{\overline\nu}(t + \nu)^{\overline \mu} = t^{\overline {\nu + \mu}}$.
\item $\nabla(\nu + t)^{\overline {\mu}} = \mu(\nu + t)^{\overline {\mu - 1}}$.
\item $\nabla(\nu - t)^{\overline {\mu}} = -\mu(\nu - \rho(t))^{\overline {\mu - 1}}$.
\item $\nabla H_{\mu}(t, a) = H_{\mu - 1}(t, a)$.
\item $\nabla H_{\mu}(t, a) - H_{\mu - 1}(t, a) = H_{\mu}(t, a + 1)$.
\item $\sum^{t}_{s = a + 1}H_{\mu}(s, a) = H_{\mu + 1}(t, a)$.
\item $\sum^{t}_{s = a + 1}H_{\mu}(t, \rho(s)) = H_{\mu + 1}(t, a)$.
\end{enumerate}
\end{proposition}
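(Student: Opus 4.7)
The plan is to handle the eight parts sequentially, using gamma-function identities as the starting point and propagating them through the generalized rising function and then through the Taylor monomial. The single tool that powers almost every calculation is the gamma recursion $\Gamma(z+1)=z\Gamma(z)$, combined with the definitions $t^{\overline r}=\Gamma(t+r)/\Gamma(t)$ and $H_\mu(t,a)=(t-a)^{\overline\mu}/\Gamma(\mu+1)$.

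Parts (1) and (2) are immediate from the definitions. For (1), positivity of $\Gamma$ on $(0,\infty)$ follows from the Euler integrand being strictly positive, and the sign flip on $(-1,0)$ follows from the extension $\Gamma(t)=\Gamma(t+1)/t$, whose numerator is positive and whose denominator is negative. For (2), the product $[\Gamma(t+\nu)/\Gamma(t)]\cdot[\Gamma(t+\nu+\mu)/\Gamma(t+\nu)]$ telescopes to $\Gamma(t+\nu+\mu)/\Gamma(t)$. Parts (3) and (4) are the central algebraic identities. For (3) I would write $\nabla(\nu+t)^{\overline\mu}$ as $\Gamma(\nu+t+\mu)/\Gamma(\nu+t)-\Gamma(\nu+t+\mu-1)/\Gamma(\nu+t-1)$, pull out the common factor $\Gamma(\nu+t+\mu-1)/\Gamma(\nu+t)$ by applying the gamma recursion once to each term, and simplify the resulting numerator $(\nu+t+\mu-1)-(\nu+t-1)=\mu$. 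Part (4) is the parallel calculation, with the overall minus sign and the $\rho(t)$ argument arising from differencing $(\nu-t)^{\overline\mu}-(\nu-t+1)^{\overline\mu}$ and tracking the orientation carefully.

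Parts (5)--(8) then follow quickly. Part (5) is part (3) with $\nu=-a$, divided by $\Gamma(\mu+1)$, using $\mu/\Gamma(\mu+1)=1/\Gamma(\mu)$. Part (6) is an analogous identity obtained by differencing in the base-point variable; I would prove it by a direct gamma computation of $H_\mu(t,a)-H_\mu(t,a+1)$ that runs parallel to part (3). Part (7) is a telescoping sum: part (5) gives $H_\mu(s,a)=\nabla_s H_{\mu+1}(s,a)$, so $\sum_{s=a+1}^t H_\mu(s,a)=H_{\mu+1}(t,a)-H_{\mu+1}(a,a)=H_{\mu+1}(t,a)$. For part (8) I would use part (4) in the $s$-variable with $\nu=t$ to rewrite $H_\mu(t,\rho(s))=H_{\mu+1}(t,\rho(s))-H_{\mu+1}(t,s)$, and then telescope the sum to $H_{\mu+1}(t,a)-H_{\mu+1}(t,t)=H_{\mu+1}(t,a)$.

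There is no deep obstacle here; the proposition is essentially a coordinated sequence of gamma-function bookkeeping exercises. The step that demands the most care is part (4), where the minus sign and the backward-jump argument must be tracked consistently when applying the gamma recursion, and in part (8), where the orientation must be matched to the identity established in (4). A standing caveat throughout is to respect the convention $t^{\overline r}=0$ when $t$ is a nonpositive integer, which matters whenever a reduction formula lands on such a value and is the reason the proposition prefaces everything with the hypothesis that all quantities are well defined.
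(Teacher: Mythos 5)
The paper does not prove this proposition at all --- it is imported verbatim from Goodrich and Peterson \cite{Go} as a preliminary --- so there is no in-paper argument to compare against. Your proof is correct and is essentially the standard textbook derivation: everything reduces to the recursion $\Gamma(z+1)=z\Gamma(z)$, parts (3)--(4) are the two orientation-sensitive difference computations, (5) follows from (3) upon dividing by $\Gamma(\mu+1)$, and (7)--(8) telescope using (5) and (4) respectively; your handling of the $\rho(s)$ argument in (8), via $H_{\mu}(t,\rho(s))=H_{\mu+1}(t,\rho(s))-H_{\mu+1}(t,s)$ and the vanishing of $H_{\mu+1}(t,t)$, is the step most easily botched and you have it right. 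One remark you should make explicit: item (6) as printed cannot be literally correct, since by item (5) its left-hand side $\nabla H_{\mu}(t,a)-H_{\mu-1}(t,a)$ vanishes identically while $H_{\mu}(t,a+1)$ does not; the intended identity is $H_{\mu}(t,a)-H_{\mu-1}(t,a)=H_{\mu}(t,a+1)$, equivalently $H_{\mu}(t,a)-H_{\mu}(t,a+1)=H_{\mu-1}(t,a)$, and that is exactly what your ``direct gamma computation of $H_{\mu}(t,a)-H_{\mu}(t,a+1)$'' establishes. So you have silently proved the corrected statement, which is the right thing to do, but the discrepancy with the printed formula deserves a sentence rather than a tacit repair.
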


\begin{proposition}[See \cite{Go}]\label{Power Rule}
Let $\nu \in \mathbb{R}^{+}$ and $\mu \in \mathbb{R}$ such that $\mu$, $\mu + \nu$ and $\mu - \nu$ are nonnegative integers. Then, for all $t \in \mathbb{N}_{a}$,
\begin{enumerate}
\item[(i)] $\nabla_{a}^{-\nu}(t - a)^{\overline{\mu}} = \frac{\Gamma(\mu + 1)}{\Gamma(\mu + \nu + 1)}(t - a)^{\overline{\mu + \nu}}$.
\item[(ii)] $\nabla_{a}^{\nu}(t - a)^{\overline{\mu}} = \frac{\Gamma(\mu + 1)}{\Gamma(\mu - \nu + 1)}(t - a)^{\overline{\mu - \nu}}$.
\item[(iii)] $\nabla_{a}^{-\nu} H_{\mu}(t, a) = H_{\mu + \nu}(t, a)$.
\item[(iv)] $\nabla_{a}^{\nu} H_{\mu}(t, a) = H_{\mu - \nu}(t, a)$.
\end{enumerate}
\end{proposition}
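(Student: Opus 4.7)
The plan is to reduce parts (i) and (ii) to parts (iii) and (iv), prove (iii) directly from the definition of the fractional sum, and deduce (iv) from (iii) by composition with an integer-order difference. The reduction is immediate: since $(t-a)^{\overline{\mu}} = \Gamma(\mu+1)\, H_\mu(t,a)$ and $\nabla_a^{\pm\nu}$ is linear, (i) follows from (iii) and (ii) from (iv) after multiplying by $\Gamma(\mu+1)$ and rewriting the right-hand side in rising-factorial form. So the real content lies in (iii) and (iv).

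For (iii) I would unfold the definition of the fractional sum to
$$\nabla_a^{-\nu} H_\mu(t, a) = \sum_{s=a+1}^{t} H_{\nu-1}(t, \rho(s))\, H_\mu(s, a),$$
and aim to identify this with $H_{\mu+\nu}(t, a)$. The cleanest route is induction on $k := t - a \in \mathbb{N}_0$. The base case $k=0$ is trivial since the sum is empty and $H_{\mu+\nu}(a,a)=0$. For the inductive step I would apply $\nabla$ in $t$ to both sides. On the right, property (5) of Proposition \ref{Gamma} gives $\nabla H_{\mu+\nu}(t,a) = H_{\mu+\nu-1}(t,a)$. On the left, differencing the sum in $t$ produces a boundary term at $s=t$ plus a sum in which $H_{\nu-1}(t,\rho(s))$ gets replaced by $H_{\nu-2}(t,\rho(s))$, again by property (5); the boundary term cancels neatly because $H_{\nu-1}(t,\rho(t))$ is the conventional ``delta at $s=t$'' of the convolution. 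This reduces the claim to the same statement with $\nu$ replaced by $\nu-1$, yielding the desired telescoping. This argument is the nabla analogue of the semigroup property $I^\nu I^{\mu+1} = I^{\mu+\nu+1}$ for Riemann--Liouville integrals and avoids any direct manipulation of ratios of gamma functions at non-integer arguments.

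For (iv), choose $N \in \mathbb{N}_1$ with $N - 1 < \nu \leq N$. By the definition of the nabla fractional difference,
$$\nabla_a^\nu H_\mu(t, a) = \nabla^N \bigl(\nabla_a^{-(N-\nu)} H_\mu(t, a)\bigr) = \nabla^N H_{\mu + N - \nu}(t, a),$$
where the second equality is (iii) applied with $\nu$ replaced by $N - \nu > 0$. Applying property (5) of Proposition \ref{Gamma} exactly $N$ times collapses this to $H_{\mu - \nu}(t, a)$, as required.

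The one substantive step is the convolution identity underlying (iii); everything else is linearity, the definition of $\nabla_a^\nu$, and repeated use of $\nabla H_\lambda = H_{\lambda-1}$. The main obstacle I anticipate is bookkeeping the boundary term at $s=t$ during the inductive $\nabla$-telescoping, since the convention $H_\mu(a,a)=0$ and the behavior of $H_{\nu-1}(t,\rho(t))$ must be handled consistently for the inductive collapse to work.
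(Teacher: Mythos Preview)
The paper does not prove this proposition at all; it is quoted from \cite{Go} and left without proof, so there is no ``paper's own proof'' to compare against. Your reductions (i)$\Leftrightarrow$(iii), (ii)$\Leftrightarrow$(iv), and your derivation of (iv) from (iii) via $\nabla_a^{\nu}=\nabla^N\nabla_a^{-(N-\nu)}$ together with $\nabla H_\lambda=H_{\lambda-1}$ are all correct.

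The gap is in your argument for (iii). You announce induction on $k=t-a$, but what your $\nabla_t$-computation actually produces is
\[
\nabla_t\Bigl(\nabla_a^{-\nu}H_\mu(t,a)\Bigr)=\sum_{s=a+1}^{t}H_{\nu-2}(t,\rho(s))H_\mu(s,a),
\]
which is the statement (iii) with $\nu$ replaced by $\nu-1$ \emph{at the same $t$}. That is not your inductive hypothesis (which concerns $t-1$ at the same $\nu$), so the induction on $k$ does not close; what you really have is a descent $\nu\mapsto\nu-1$. Also, the boundary term does not ``cancel'': $H_{\nu-1}(t,\rho(t))=1$, and this $1\cdot H_\mu(t,a)$ is precisely what supplies the $s=t$ term of the new sum (since $H_{\nu-2}(t,\rho(t))=1$ as well).

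Under the hypotheses \emph{as literally stated} in the proposition (that $\mu,\mu+\nu,\mu-\nu$ are nonnegative integers), $\nu$ is forced to be a positive integer, and then your descent $\nu\mapsto\nu-1$ does terminate: the base case $\nu=1$ is exactly item (7) of Proposition~\ref{Gamma}, and the step is your computation above combined with $F_\nu(a)=0=H_{\mu+\nu}(a,a)$. So your idea can be salvaged there, but the induction variable is $\nu$, not $t-a$. For the version with genuinely non-integer $\nu$ (which is how the power rule is stated and proved in \cite{Go} and how such identities are used throughout nabla fractional calculus), the descent does not terminate and one instead evaluates the convolution sum directly via a Chu--Vandermonde-type identity for rising factorials; that is the ``direct manipulation of ratios of gamma functions'' you were hoping to avoid.
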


\begin{proposition}[See \cite{Ik 2}]\label{H}
Let $\mu > -1$ and $s \in \mathbb{N}_{a}$. Then, the following hold:
\begin{enumerate}
\item[(a)] If $t \in \mathbb{N}_{\rho(s)}$, then $H_{\mu}(t, \rho(s)) \geq 0$, and if $t \in \mathbb{N}_{s}$, then $H_{\mu}(t, \rho(s)) > 0$.
\item[(b)] If $t \in \mathbb{N}_{\rho(s)}$ and $\mu > 0$, then $H_{\mu}(t, \rho(s))$ is a decreasing function of $s$.
\item[(c)] If $t \in \mathbb{N}_{s}$ and $-1 < \mu < 0$, then $H_{\mu}(t, \rho(s))$ is an increasing function of $s$.
\item[(d)] If $t \in \mathbb{N}_{\rho(s)}$ and $\mu \geq 0$, then $H_{\mu}(t, \rho(s))$ is a nondecreasing function of $t$.
\item[(e)] If $t \in \mathbb{N}_{s}$ and $\mu > 0$, then $H_{\mu}(t, \rho(s))$ is an increasing function of $t$.
\item[(f)] If $t \in \mathbb{N}_{s + 1}$ and $-1 < \mu < 0$, then $H_{\mu}(t, \rho(s))$ is a decreasing function of $t$.
\end{enumerate}
\end{proposition}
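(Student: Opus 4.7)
The plan is to reduce every assertion to a sign check on a single closed-form ratio, exploiting the Gamma-function representation
\[
H_{\mu}(t, \rho(s)) = \frac{\Gamma(t - \rho(s) + \mu)}{\Gamma(t - \rho(s))\,\Gamma(\mu + 1)}.
\]
This makes monotonicity in $t$ and in $s$ completely mechanical once sign information about the two Gamma factors is in hand.

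For part (a), I would argue directly. Since $\mu > -1$ we have $\mu + 1 > 0$, hence $\Gamma(\mu + 1) > 0$ by Proposition \ref{Gamma}. For $t \in \mathbb{N}_{s}$ we have $t - \rho(s) \geq 1$, so $\Gamma(t - \rho(s)) > 0$ and, because $t - \rho(s) + \mu > 0$, also $\Gamma(t - \rho(s) + \mu) > 0$. Thus $H_{\mu}(t, \rho(s)) > 0$ on $\mathbb{N}_{s}$. The boundary case $t = \rho(s)$ is settled by the convention built into the definition of the generalized rising function, which forces $H_{\mu}(\rho(s), \rho(s)) = 0$.

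For parts (b)--(c), I would form the $s$-ratio. Using the identity $\Gamma(x+1) = x\,\Gamma(x)$ twice,
\[
\frac{H_{\mu}(t, \rho(s+1))}{H_{\mu}(t, \rho(s))} = \frac{H_{\mu}(t, s)}{H_{\mu}(t, s - 1)} = \frac{\Gamma(t - s + \mu)\,\Gamma(t - s + 1)}{\Gamma(t - s)\,\Gamma(t - s + \mu + 1)} = \frac{t - s}{t - s + \mu}.
\]
For $t \geq s + 1$ both $t - s$ and $t - s + \mu$ are positive (using $\mu > -1$), so the ratio is well-defined and positive. It is strictly less than $1$ precisely when $\mu > 0$, giving the strict decrease in $s$ required in (b); and strictly greater than $1$ when $-1 < \mu < 0$, giving the strict increase in (c). Boundary values where the comparison degenerates (for example comparing with zeros arising from $t = s$) are handled by invoking (a) directly.

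For parts (d)--(f), I would form the analogous $t$-ratio,
\[
\frac{H_{\mu}(t + 1, \rho(s))}{H_{\mu}(t, \rho(s))} = \frac{t - \rho(s) + \mu}{t - \rho(s)},
\]
and read off the three monotonicity statements: when $t > \rho(s)$ the denominator is a positive integer, and the numerator is at least as large when $\mu \geq 0$ (giving (d)), strictly larger when $\mu > 0$ (giving (e) on $\mathbb{N}_{s}$, where $t - \rho(s) \geq 1$), and strictly smaller but still positive when $-1 < \mu < 0$ and $t \in \mathbb{N}_{s+1}$ (so that $t - \rho(s) \geq 1$ and $t - \rho(s) + \mu > 0$), giving (f). The step $t = \rho(s) \to t = s$ that appears in (d) is not covered by the ratio (the denominator vanishes) and must be checked by hand: $H_{\mu}(\rho(s), \rho(s)) = 0 \leq 1 = H_{\mu}(s, \rho(s))$. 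The only real bookkeeping obstacle is precisely these boundary points ($t = \rho(s)$, $t = s$, or $s = a$ so that $\rho(s) = s$); everywhere else the proof is a two-line Gamma-ratio calculation.
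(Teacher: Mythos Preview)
The paper does not supply its own proof of this proposition; it is quoted verbatim from the reference \cite{Ik 2} (Ikram), so there is nothing in the present paper to compare your argument against. That said, your approach---writing $H_{\mu}(t,\rho(s))$ as a ratio of Gamma values and then reading off monotonicity from the one-step ratios $\dfrac{t-s}{t-s+\mu}$ and $\dfrac{t-\rho(s)+\mu}{t-\rho(s)}$---is the standard way these facts are established and is correct, including your handling of the degenerate boundary points $t=\rho(s)$ and $t=s$ via the convention $0^{\overline{\mu}}=0$ and part (a).
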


\begin{proposition}[See \cite{Ik 2}]\label{Mono}
If $0 < \nu \leq \mu$, then $H_{\nu}(t, a) \le H_{\mu}(t, a)$, for each fixed $t \in \mathbb{N}_{a}$.
\end{proposition}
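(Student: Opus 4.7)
The plan is to reduce the inequality, for each fixed $t \in \mathbb{N}_{a}$, to an elementary monotonicity statement in $\mu$. Write $t = a + k$ with $k \in \mathbb{N}_{0}$. The case $k = 0$ is immediate, since $H_{\mu}(a, a) = 0$ for every admissible $\mu$ (as recorded in the definition of the Taylor monomial), so both sides of the inequality vanish.

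For $k \geq 1$, I would use $\Gamma(k) = (k-1)!$ together with the definition of $H_{\mu}(t, a)$ to rewrite
\begin{equation*}
H_{\mu}(a + k, a) \;=\; \frac{\Gamma(k + \mu)}{\Gamma(k)\,\Gamma(\mu + 1)} \;=\; \prod_{j=1}^{k-1} \frac{\mu + j}{j},
\end{equation*}
where the empty product (the case $k = 1$) is $1$. This is the only nontrivial identity needed, and it is just a telescoping of the gamma-ratio into a product of linear factors in $\mu$. Because each factor $(\mu + j)/j$ with $j \geq 1$ is positive and strictly increasing in $\mu$ on $(0, \infty)$, the product is a nondecreasing function of $\mu$ on that interval. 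Evaluating at $\nu$ and at $\mu$ with $0 < \nu \leq \mu$ gives $H_{\nu}(a + k, a) \leq H_{\mu}(a + k, a)$, which is the required inequality.

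There is no serious obstacle. The only subtlety worth flagging is that the statement involves $\mathbb{N}_{a}$, so one must explicitly handle $t = a$ (where the Taylor monomial vanishes for every $\mu$) before reducing to the positive-integer case. As an alternative to the product form, one could differentiate $\log H_{\mu}(a + k, a)$ in $\mu$ and use the digamma identity $\psi(k + \mu) - \psi(\mu + 1) = \sum_{j=1}^{k-1} \frac{1}{\mu + j} \geq 0$, but the elementary product presentation is cleaner and avoids introducing machinery beyond the properties collected in Proposition \ref{Gamma}.
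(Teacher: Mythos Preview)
Your argument is correct: writing $t=a+k$ and factoring
\[
H_{\mu}(a+k,a)=\frac{\Gamma(k+\mu)}{\Gamma(k)\,\Gamma(\mu+1)}=\prod_{j=1}^{k-1}\frac{\mu+j}{j}
\]
makes the monotonicity in $\mu$ transparent, and the boundary case $t=a$ is handled by the convention $H_{\mu}(a,a)=0$. There is nothing to compare against in the present paper, however: Proposition~\ref{Mono} is quoted from \cite{Ik 2} without proof, so the paper contains no argument of its own for this statement. Your elementary product representation is a clean self-contained justification and is entirely in the spirit of the surrounding material.
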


\begin{proposition}[See \cite{Ik 2}]\label{Max}
Let $f$, $g$ be nonnegative real-valued functions on a set $S$. Moreover, assume $f$ and $g$ attain their maximum in $S$. Then, for each fixed $t \in S$, $$\big{|}f(t) - g(t)\big{|} \leq \max \big{\{}f(t), g(t)\big{\}} \leq \max \Big{\{}\max_{t \in S}f(t), \max_{t \in S}g(t)\Big{\}}.$$
\end{proposition}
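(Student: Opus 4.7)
The plan is to handle the two inequalities separately, since they are essentially independent assertions about the pointwise comparison of $f$ and $g$ and about their uniform bounds.

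For the first inequality $|f(t) - g(t)| \leq \max\{f(t), g(t)\}$, I would argue by cases. Fix $t \in S$ and suppose without loss of generality that $f(t) \geq g(t)$; the symmetric case is identical. Then since both values are nonnegative, $|f(t) - g(t)| = f(t) - g(t) \leq f(t) = \max\{f(t), g(t)\}$, where the inequality uses $g(t) \geq 0$. This is precisely where the hypothesis of nonnegativity enters; without it, the inequality would fail (for example, $f(t) = 1$, $g(t) = -1$ gives $|f(t) - g(t)| = 2 > 1$).

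For the second inequality $\max\{f(t), g(t)\} \leq \max\{\max_{s \in S} f(s), \max_{s \in S} g(s)\}$, I would use the defining property of the supremum: for each fixed $t \in S$, we have $f(t) \leq \max_{s \in S} f(s)$ and $g(t) \leq \max_{s \in S} g(s)$, so each is bounded by the larger of the two global maxima. Taking the maximum of the two pointwise values preserves the bound. The assumption that $f$ and $g$ attain their maxima on $S$ is used here only to ensure that $\max_{s \in S} f(s)$ and $\max_{s \in S} g(s)$ exist as genuine maxima (rather than merely suprema).

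Chaining the two inequalities then yields the claimed double inequality. There is no real obstacle in this proof; the only subtlety is remembering to invoke nonnegativity of $f$ and $g$ at the step bounding $|f(t) - g(t)|$ by $\max\{f(t), g(t)\}$, and invoking the existence of the maxima at the outer bound.
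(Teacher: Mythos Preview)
Your argument is correct and complete: the case split using nonnegativity for the first inequality and the pointwise-versus-global bound for the second are exactly the right ideas, and you have also identified precisely where each hypothesis is used. Note that the paper itself does not give a proof of this proposition but merely cites it from \cite{Ik 2}, so there is nothing to compare against; your write-up would serve perfectly well as a self-contained proof.
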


\begin{proposition}\label{h}
Let $\mu > -1$, $s \in \mathbb{N}_{a + 1}$, and $t \in \mathbb{N}_{s}$. Denote by $$h_{\mu}(t, s) = \frac{H_{\mu}(t, \rho(s))}{H_{\mu}(t, a)}.$$ Then, the following hold:
\begin{enumerate}
\item[(I)] $0 < h_{\mu}(t, s)$. 
\item[(II)] If $\mu \geq 0$, then $h_{\mu}(t, s) \leq 1$, and if $-1 < \mu < 0$, then $h_{\mu}(t, s) > 1$. In particular, $h_{0}(t, s) = 1$.
\item[(III)] If $\mu > 0$, then $h_{\mu}(t, s)$ is an increasing function of $t$.
\item[(IV)] If $-1 < \mu < 0$, then $h_{\mu}(t, s)$ is a decreasing function of $t$.
\end{enumerate}
\end{proposition}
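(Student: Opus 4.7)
The plan is to dispatch (I) and (II) directly from the positivity and monotonicity results in Proposition \ref{H}, and to handle the $t$-monotonicity statements (III) and (IV) by rewriting $h_{\mu}$ as a finite product of simple rational functions of $t$. Part (I) is immediate: the hypotheses $s \in \mathbb{N}_{a+1}$ and $t \in \mathbb{N}_{s}$ give $t \ge s \ge \rho(s)+1 > a$, so Proposition \ref{H}(a) makes both $H_{\mu}(t,\rho(s))$ and $H_{\mu}(t,a)$ strictly positive, hence $h_{\mu}(t,s) > 0$. For (II), observe that $H_{\mu}(t,a) = H_{\mu}(t,\rho(a+1))$; thus comparing $H_{\mu}(t,\rho(s))$ with $H_{\mu}(t,a)$ amounts to comparing the map $s \mapsto H_{\mu}(t,\rho(s))$ at a generic $s \ge a+1$ with its value at the minimal allowable argument $s = a+1$. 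Proposition \ref{H}(b) then gives $h_{\mu} \le 1$ for $\mu > 0$; Proposition \ref{H}(c) gives $h_{\mu} \ge 1$ for $-1 < \mu < 0$; and $h_{0} \equiv 1$ is immediate from $H_{0} \equiv 1$.

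For (III) and (IV), I would unfold the definitions to write
$$h_{\mu}(t,s) = \frac{(t-\rho(s))^{\overline{\mu}}}{(t-a)^{\overline{\mu}}} = \frac{\Gamma(t-\rho(s)+\mu)\,\Gamma(t-a)}{\Gamma(t-\rho(s))\,\Gamma(t-a+\mu)}.$$
Setting $k := s - a - 1 \ge 0$ and applying the recursion $\Gamma(z+1) = z\Gamma(z)$ repeatedly telescopes the two gamma quotients into the finite product
$$h_{\mu}(t,s) = \prod_{j=1}^{k} \frac{t-a-j}{t-a-j+\mu},$$
with the empty-product convention when $s = a+1$ (in which case $h_{\mu} \equiv 1$ and (III), (IV) are trivial). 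Since $t \ge s$ and $1 \le j \le s-a-1$, one has $t-a-j \ge 1$; combined with $\mu > -1$, this makes every factor in the product strictly positive.

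Each factor has the form $\phi_{\mu}(x) = x/(x+\mu) = 1 - \mu/(x+\mu)$ evaluated at $x = t-a-j$, and direct inspection shows that $\phi_{\mu}$ is strictly increasing in $x$ when $\mu > 0$ and strictly decreasing when $-1 < \mu < 0$. Because a product of finitely many positive, strictly monotone (in the same direction) functions of $t$ is itself strictly monotone in that direction, (III) and (IV) follow at once. The only step that is not a direct appeal to Proposition \ref{H} is the gamma-function telescoping that produces the product formula; this is the main obstacle, but it is essentially bookkeeping once the substitution $k = s-a-1$ is made, after which everything reduces to sign-checking of the elementary factors $\phi_{\mu}$.
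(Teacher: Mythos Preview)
Your treatment of (I) and (II) is correct and essentially matches the paper's: both reduce to positivity (Proposition~\ref{H}(a)) and $s$-monotonicity (Proposition~\ref{H}(b),(c)) of the Taylor monomials. One small remark: for $-1<\mu<0$ your argument yields $h_\mu \ge 1$, with equality precisely when $s=a+1$; this is the correct statement, and in fact the paper's own proof has the same degenerate case.

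For (III) and (IV) your argument is correct but follows a genuinely different route from the paper. The paper proceeds by a direct computation of the nabla difference
\[
\nabla_t h_{\mu}(t,s) = \mu\,(s-a-1)\,\frac{\Gamma(t-s+\mu)\,\Gamma(t-a-1)}{\Gamma(t-s+1)\,\Gamma(t-a+\mu)},
\]
obtained by expanding $h_\mu(t,s)-h_\mu(t-1,s)$ and simplifying; the sign of $\nabla_t h_\mu$ is then read off from the sign of $\mu$ (the case $s=a+1$ giving a constant). Your approach instead telescopes the gamma quotients into the finite product $\prod_{j=1}^{s-a-1}(t-a-j)/(t-a-j+\mu)$ and observes that each factor $x\mapsto x/(x+\mu)$ is positive and strictly monotone in the required direction for $x\ge 1$, $\mu>-1$. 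Both arguments are short; yours avoids the algebraic simplification step and makes the monotonicity visually transparent, while the paper's version has the minor bonus of producing an explicit closed form for $\nabla_t h_\mu$ (which, however, is not used elsewhere in the paper).
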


\begin{proof}
(I) First, consider 
\begin{equation}\label{h 1}
h_{\mu}(t, s) = \frac{(t - \rho(s))^{\overline{\mu}}}{(t - a)^{\overline{\mu}}} = \frac{\Gamma(t - s + \mu + 1)\Gamma(t - a)}{\Gamma(t - s + 1)\Gamma(t - a + \mu)}.
\end{equation}
Since $\Gamma(t - a)$, $\Gamma(t - a + \mu)$, $\Gamma(t - s + 1)$, $\Gamma(t - s + \mu + 1) > 0$, it follows from \eqref{h 1} that $h_{\mu}(t, s) > 0$.

(II) The proof of (II) follows from the monotonicity of $H_{\mu}(t, \rho(s))$ with respect to $s$. 

(III) Next, consider
\begin{align}
\nonumber \nabla h_{\mu}(t, s) & = \nabla \Big{[}\frac{(t - \rho(s))^{\overline{\mu}}}{(t - a)^{\overline{\mu}}}\Big{]} \\ \nonumber & = \frac{(t - s + 1)^{\overline{\mu}}}{(t - a)^{\overline{\mu}}} - \frac{(t - s)^{\overline{\mu}}}{(t - a - 1)^{\overline{\mu}}} \\ \nonumber & = \frac{\Gamma(t - s + \mu + 1)\Gamma(t - a)}{\Gamma(t - s + 1)\Gamma(t - a + \mu)} - \frac{\Gamma(t - s + \mu)\Gamma(t - a - 1)}{\Gamma(t - s)\Gamma(t - a + \mu - 1)} 
\end{align}
\begin{align} 
\nonumber & = \frac{\Gamma(t - s + \mu)\Gamma(t - a - 1)}{\Gamma(t - s)\Gamma(t - a + \mu - 1)} \Big{[}\frac{(t - s + \mu)(t - a - 1)}{(t - s)(t - a + \mu - 1)} - 1\Big{]} \\ & = \mu (s - a - 1)\frac{\Gamma(t - s + \mu)\Gamma(t - a - 1)}{\Gamma(t - s + 1)\Gamma(t - a + \mu)}. \label{h 2}
\end{align} 
Since $\Gamma(t - a - 1)$, $\Gamma(t - a + \mu)$, $\Gamma(t - s + \mu)$, $\Gamma(t - s + 1)$, $(s - a - 1) > 0$, it follows from \eqref{h 2} that $\nabla h_{\mu}(t, s) > 0$, implying that (III) holds.

(IV) Clearly, from \eqref{h 2}, we have 
\begin{equation}\label{h 3}
\nabla h_{-\mu}(t, s) = -\mu(s - a - 1)\frac{\Gamma(t - s - \mu)\Gamma(t - a - 1)}{\Gamma(t - s + 1)\Gamma(t - a - \mu)}.
\end{equation}
Since $\Gamma(t - a - 1)$, $\Gamma(t - a - \mu)$, $\Gamma(t - s - \mu)$, $\Gamma(t - s + 1) > 0$, $(s - a - 1) > 0$, it follows from \eqref{h 3} that $\nabla h_{-\mu}(t, s) < 0$, implying that (IV) holds.
\end{proof}

\section{Properties of Green's Function}

In this section, we obtain a few properties of $G(t, s)$ which we use in the later part of the article.

\begin{lemma}\label{Non Negative}
Assume $\alpha$, $\beta$, $\gamma$, $\delta \geq 0$ and $\beta \geq \alpha$ such that \eqref{Condition} holds.
\begin{enumerate}
\item $\xi > 0$ for all $t \in \mathbb{N}^{b}_{a}$.
\item $u(t, s) \geq 0$ for all $(t, s) \in \mathbb{N}^{b}_{a} \times \mathbb{N}^{b}_{a + 1}$ such that $t \leq s - 1$.
\item $v(t, s) \geq 0$ for all $(t, s) \in \mathbb{N}^{b}_{a} \times \mathbb{N}^{b}_{a + 1}$ such that $t \geq s$.
\end{enumerate}
\end{lemma}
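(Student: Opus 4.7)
The plan is to dispatch (1) and (2) quickly and to concentrate the work on (3). For (1), every summand of $\xi=(\beta-\alpha)\gamma+\alpha\gamma H_{\nu-1}(b,a)+\alpha\delta H_{\nu-2}(b,a)$ is nonnegative under the sign hypotheses: the scalar weights $(\beta-\alpha)\gamma$, $\alpha\gamma$, $\alpha\delta$ are all $\geq 0$, while $H_{\nu-1}(b,a),H_{\nu-2}(b,a)>0$ by Proposition \ref{H}(a) (here $\nu-1>0$, $\nu-2\in(-1,0)$, and $b>a$). Hence $\xi\geq 0$, and the standing hypothesis \eqref{Condition} that $\xi\neq 0$ upgrades this to $\xi>0$. (The phrase ``for all $t\in\mathbb{N}_a^b$'' in the statement appears vestigial, since $\xi$ is independent of $t$.) For (2), I would simply read \eqref{Gen Green U}: its four coefficients $\alpha\gamma,\alpha\delta,(\beta-\alpha)\gamma,(\beta-\alpha)\delta$ are all $\geq 0$; every $H_\mu$-factor has $\mu\in\{\nu-1,\nu-2\}\subset(-1,\infty)$ with first argument $\geq$ second, so Proposition \ref{H}(a) makes each factor nonnegative; and dividing by $\xi>0$ yields $u(t,s)\geq 0$.

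For (3), the first move is the factorisation $u(t,s)=A(t)B(s)/\xi$, where $A(t):=\alpha H_{\nu-1}(t,a)+(\beta-\alpha)$ and $B(s):=\gamma H_{\nu-1}(b,\rho(s))+\delta H_{\nu-2}(b,\rho(s))$, which collapses the four terms of $u$ into a single product. Substituting the explicit form of $\xi$ into $\xi v(t,s)=A(t)B(s)-\xi H_{\nu-1}(t,\rho(s))$ and regrouping by the coefficients $\alpha$ and $\beta-\alpha$, I would obtain
\begin{align*}
\xi v(t,s) & = \alpha\gamma\bigl[H_{\nu-1}(t,a)H_{\nu-1}(b,\rho(s))-H_{\nu-1}(b,a)H_{\nu-1}(t,\rho(s))\bigr]\\
& \quad +\alpha\delta\bigl[H_{\nu-1}(t,a)H_{\nu-2}(b,\rho(s))-H_{\nu-2}(b,a)H_{\nu-1}(t,\rho(s))\bigr]\\
& \quad +(\beta-\alpha)\gamma\bigl[H_{\nu-1}(b,\rho(s))-H_{\nu-1}(t,\rho(s))\bigr]+(\beta-\alpha)\delta H_{\nu-2}(b,\rho(s)).
\end{align*}
The third line is $\geq 0$ because $t\leq b$ combined with Proposition \ref{H}(e) gives $H_{\nu-1}(b,\rho(s))\geq H_{\nu-1}(t,\rho(s))$, while Proposition \ref{H}(a) gives $H_{\nu-2}(b,\rho(s))\geq 0$. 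For the first two lines I would invoke Proposition \ref{h} and write $H_\mu(r,\rho(s))=h_\mu(r,s)H_\mu(r,a)$: the first bracket then reduces to $H_{\nu-1}(t,a)H_{\nu-1}(b,a)\bigl[h_{\nu-1}(b,s)-h_{\nu-1}(t,s)\bigr]\geq 0$ by Proposition \ref{h}(III) with $t\leq b$, and the second to $H_{\nu-1}(t,a)H_{\nu-2}(b,a)\bigl[h_{\nu-2}(b,s)-h_{\nu-1}(t,s)\bigr]\geq 0$ via the sandwich $h_{\nu-2}(b,s)>1\geq h_{\nu-1}(t,s)$ from Proposition \ref{h}(II).

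The step I expect to be most delicate is the second $\alpha$-bracket, since it mixes monomials of orders $\nu-1>0$ and $\nu-2\in(-1,0)$ on opposite sides of zero: the monotonicities of $H_\mu(b,\rho(s))$ in $s$ from Proposition \ref{H}(b)--(c) point in opposite directions, so no direct comparison at the level of $H$ is available. The renormalised ratio $h_\mu$ of Proposition \ref{h} is exactly what is needed to convert this into a clean scalar inequality, and the sandwich $h_{\nu-2}(b,s)>1\geq h_{\nu-1}(t,s)$ -- which exploits both regimes of Proposition \ref{h}(II) simultaneously -- is the crux of the argument.
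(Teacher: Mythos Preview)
Your proof is correct and follows essentially the same decomposition as the paper: expand $\xi v(t,s)$ into four brackets with coefficients $\alpha\gamma$, $\alpha\delta$, $(\beta-\alpha)\gamma$, $(\beta-\alpha)\delta$ and show each is nonnegative. The arguments for $(\beta-\alpha)\gamma$, $(\beta-\alpha)\delta$, and $\alpha\gamma$ match the paper's exactly (the paper also invokes the monotonicity of $h_{\nu-1}$ from Proposition~\ref{h}(III) for the $\alpha\gamma$ bracket).

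The one place you diverge is the $\alpha\delta$ bracket, and here your diagnosis is backwards. You write that the monotonicities of $H_{\nu-1}$ and $H_{\nu-2}$ in $s$ ``point in opposite directions, so no direct comparison at the level of $H$ is available.'' In fact they cooperate: Proposition~\ref{H}(b) gives $H_{\nu-1}(t,\rho(s))\leq H_{\nu-1}(t,a)$ and Proposition~\ref{H}(c) gives $H_{\nu-2}(b,a)\leq H_{\nu-2}(b,\rho(s))$, and multiplying these two inequalities (all quantities positive) immediately yields
\[
H_{\nu-1}(t,\rho(s))H_{\nu-2}(b,a)\leq H_{\nu-1}(t,a)H_{\nu-2}(b,\rho(s)),
\]
which is precisely the desired bracket inequality. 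This is what the paper does, and it is shorter than your route through the sandwich $h_{\nu-2}(b,s)>1\geq h_{\nu-1}(t,s)$. Your sandwich argument is valid, but the step you flagged as ``most delicate'' is actually the easiest of the two $\alpha$-brackets.
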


\begin{proof}
(1) From Proposition \ref{H}, we have $H_{\nu - 1}(b, a)$, $H_{\nu - 2}(b, a) > 0$ implying that $$\xi = (\beta - \alpha)\gamma + \alpha \gamma H_{\nu - 1}(b, a) + \alpha \delta H_{\nu - 2}(b, a) > 0.$$
(2) From Proposition \ref{H}, we have $H_{\nu - 1}(b, \rho(s))$, $H_{\nu - 2}(b, \rho(s)) > 0$ for all $s \in \mathbb{N}^{b}_{a + 1}$, and $H_{\nu - 1}(t, a) \geq 0$ for all $t \in \mathbb{N}^{b}_{a}$. Also, from (1), we have $\xi > 0$ for all $t \in \mathbb{N}^{b}_{a}$. Thus, we obtain  
\begin{multline}
\nonumber u(t, s) = \frac{1}{\xi}\Big{[}\alpha \gamma H_{\nu - 1}(t, a)H_{\nu - 1}(b, \rho(s)) + \alpha \delta H_{\nu - 1}(t, a)H_{\nu - 2}(b, \rho(s)) \\  + (\beta - \alpha) \gamma H_{\nu - 1}(b, \rho(s)) + (\beta - \alpha) \delta H_{\nu - 2}(b, \rho(s))\Big{]} \geq 0,
\end{multline}
for all $(t, s) \in \mathbb{N}^{b}_{a} \times \mathbb{N}^{b}_{a + 1}$ such that $t \leq s - 1$.

(3) Consider
\begin{align}
\nonumber v(t, s) & = u(t, s) - H_{\nu - 1}(t, \rho(s)) \\ \nonumber & = \frac{1}{\xi}\Big{[}\alpha \gamma H_{\nu - 1}(t, a)H_{\nu - 1}(b, \rho(s)) + \alpha \delta H_{\nu - 1}(t, a)H_{\nu - 2}(b, \rho(s)) \\ & + (\beta - \alpha) \gamma H_{\nu - 1}(b, \rho(s)) + (\beta - \alpha) \delta H_{\nu - 2}(b, \rho(s)) - \xi H_{\nu - 1}(t, \rho(s))\Big{]} 
\end{align}
\begin{align} 
\nonumber & = \frac{1}{\xi}\Big{[}(\beta - \alpha) \delta H_{\nu - 2}(b, \rho(s)) + (\beta - \alpha)\gamma\Big{(}H_{\nu - 1}(b, \rho(s)) - H_{\nu - 1}(t, \rho(s))\Big{)} \\ \nonumber & + \alpha \delta \Big{(}H_{\nu - 1}(t, a)H_{\nu - 2}(b, \rho(s)) - H_{\nu - 1}(t, \rho(s))H_{\nu - 2}(b, a)\Big{)} \\ & + \alpha \gamma \Big{(}H_{\nu - 1}(t, a)H_{\nu - 1}(b, \rho(s)) - H_{\nu - 1}(b, a)H_{\nu - 1}(t, \rho(s))\Big{)} \Big{]} \label{Imp} \\ \nonumber & = \frac{1}{\xi}\Big{[}E_{1} + E_{2} + E_{3} + E_{4}\Big{]}.  
\end{align}
We already know that $\xi > 0$ for all $t \in \mathbb{N}^{b}_{a}$. Now, we show that $$E_{i} \geq 0, \quad i = 1, 2, 3, 4.$$ From Proposition \ref{H}, we have $H_{\nu - 2}(b, \rho(s)) > 0$ for all $s \in \mathbb{N}^{b}_{a + 1}$. So, we obtain $$E_{1} \geq 0.$$ Again, from Proposition \ref{H}, we have $H_{\nu - 1}(t, \rho(s)) \leq H_{\nu - 1}(b, \rho(s))$ for all $(t, s) \in \mathbb{N}^{b}_{a} \times \mathbb{N}^{b}_{a + 1}$ such that $t \geq s$, implying that $$E_{2} \geq 0.$$ From Proposition \ref{H}, we have $H_{\nu - 1}(t, \rho(s)) \leq H_{\nu - 1}(t, a)$, $H_{\nu - 2}(b, a) \leq H_{\nu - 2}(b, \rho(s))$ for all $(t, s) \in \mathbb{N}^{b}_{a} \times \mathbb{N}^{b}_{a + 1}$ such that $t \geq s$, implying that $$E_{3} \geq 0.$$ Now, consider
\begin{align*}
& H_{\nu - 1}(t, a)H_{\nu - 1}(b, \rho(s)) - H_{\nu - 1}(b, a)H_{\nu - 1}(t, \rho(s)) \\ & =  H_{\nu - 1}(b, a)H_{\nu - 1}(t, \rho(s))\Big{[}\frac{H_{\nu - 1}(b, \rho(s))}{H_{\nu - 1}(b, a)} \cdot \frac{H_{\nu - 1}(t, a)}{H_{\nu - 1}(t, \rho(s))} - 1\Big{]} \\ & =  H_{\nu - 1}(b, a)H_{\nu - 1}(t, \rho(s))\Big{[}\frac{h_{\nu - 1}(b, s)}{h_{\nu - 1}(t, s)} - 1\Big{]}.
\end{align*}
From Proposition \ref{H}, we have $H_{\nu - 1}(b, a)$, $H_{\nu - 1}(t, \rho(s)) > 0$, and $h_{\nu - 1}(b, s) > h_{\nu - 1}(t, s)$ for all $(t, s) \in \mathbb{N}^{b}_{a} \times \mathbb{N}^{b}_{a + 1}$ such that $t \geq s$, implying that $$E_{4} \geq 0.$$ Therefore, we obtain $v(t, s) \geq 0$ for all $(t, s) \in \mathbb{N}^{b}_{a} \times \mathbb{N}^{b}_{a + 1}$ such that $t \geq s$.
\end{proof}

\begin{theorem}\label{Non Negative G}
$G(t, s) \geq 0$ for all $(t, s) \in \mathbb{N}^{b}_{a} \times \mathbb{N}^{b}_{a + 1}$.
\end{theorem}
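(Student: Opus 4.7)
The plan is to observe that this theorem is an immediate corollary of Lemma \ref{Non Negative}. For any pair $(t, s) \in \mathbb{N}^{b}_{a} \times \mathbb{N}^{b}_{a + 1}$, exactly one of the inequalities $t \leq s - 1$ or $t \geq s$ holds — these conditions partition the domain because $t$ and $s$ are integers. By the piecewise definition \eqref{Gen Green}, $G(t, s) = u(t, s)$ on the first piece and $G(t, s) = v(t, s)$ on the second.

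The execution is then a single step: apply part (2) of Lemma \ref{Non Negative} to conclude $u(t, s) \geq 0$ when $t \leq s - 1$, and apply part (3) to conclude $v(t, s) \geq 0$ when $t \geq s$. Combined, this yields $G(t, s) \geq 0$ on the whole product. The standing hypotheses of Lemma \ref{Non Negative} — namely $\alpha, \beta, \gamma, \delta \geq 0$, $\beta \geq \alpha$, and \eqref{Condition} — must be assumed here as well; these appear to be in force throughout Section 3, since without them $\xi$ could vanish or change sign and the conclusion would fail.

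There is no significant obstacle, as all the technical content has already been absorbed into the lemma. In particular, the positivity of $\xi$, the decomposition of $v(t, s)$ into the four nonnegative summands $E_1, E_2, E_3, E_4$, and the delicate step showing $E_4 \geq 0$ via the inequality $h_{\nu - 1}(b, s) > h_{\nu - 1}(t, s)$ (which rests on Proposition \ref{h}(III)) were all handled in the proof of Lemma \ref{Non Negative}. The role of this theorem is simply to package parts (2) and (3) of the lemma into a single global statement about $G(t, s)$ on its full domain, which will be convenient when deriving the Lyapunov-type inequality for \eqref{Gen Non Hom}.
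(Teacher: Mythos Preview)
Your proposal is correct and matches the paper's own proof, which consists of the single sentence ``The proof follows from the preceding Lemma.'' Your additional remarks about the domain partition and the implicit standing hypotheses are accurate elaborations of what the paper leaves unsaid.
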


\begin{proof}
The proof follows from the preceding Lemma.
\end{proof}

\begin{lemma}\label{Monotonic}
Assume $\alpha$, $\beta$, $\gamma$, $\delta \geq 0$ and $\beta \geq \alpha$ such that \eqref{Condition} holds.
\begin{enumerate}
\item $u(t, s)$ is an increasing function of $t$ for all $(t, s) \in \mathbb{N}^{b}_{a} \times \mathbb{N}^{b}_{a + 1}$ such that $t \leq s - 1$.
\item $v(t, s)$ is a decreasing function of $t$ for all $(t, s) \in \mathbb{N}^{b}_{a} \times \mathbb{N}^{b}_{a + 1}$ such that $t \geq s$.
\end{enumerate}
\end{lemma}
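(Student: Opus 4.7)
For part (1), the idea is to compute $\nabla_{t} u(t,s)$ directly from the explicit formula \eqref{Gen Green U}. The two terms in $u(t,s)$ that carry no $t$-dependence drop out, and Proposition~\ref{Gamma}(5) reduces the remainder to
\[
\nabla_{t} u(t,s) = \frac{1}{\xi}\Bigl[\alpha\gamma H_{\nu-2}(t,a)H_{\nu-1}(b,\rho(s)) + \alpha\delta H_{\nu-2}(t,a)H_{\nu-2}(b,\rho(s))\Bigr].
\]
Since $\xi > 0$ by Lemma~\ref{Non Negative}(1), $\alpha, \gamma, \delta \geq 0$, and each Taylor monomial above is nonnegative by Proposition~\ref{H}(a) (using $\nu - 2 > -1$), the right-hand side is $\geq 0$, giving (1) at once.

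Part (2) is the substantive part. Differentiating $v = u(t,s) - H_{\nu-1}(t,\rho(s))$ directly yields $\nabla_{t} u(t,s) - H_{\nu-2}(t,\rho(s))$, whose sign is not transparent in this form. Instead, I would start from the regrouping $v = \xi^{-1}(E_{1} + E_{2} + E_{3} + E_{4})$ already introduced in \eqref{Imp} in the proof of Lemma~\ref{Non Negative}, and apply $\nabla_{t}$ term by term. The term $E_{1} = (\beta - \alpha)\delta H_{\nu-2}(b,\rho(s))$ is $t$-independent, and $\nabla_{t} E_{2} = -(\beta - \alpha)\gamma H_{\nu-2}(t,\rho(s)) \leq 0$ by Proposition~\ref{Gamma}(5) together with $\beta \geq \alpha$, $\gamma \geq 0$.

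The main obstacle is $\nabla_{t} E_{3}$ and $\nabla_{t} E_{4}$: each is a difference of products of Taylor monomials of mixed orders $\nu-1$ and $\nu-2$, and neither sign is immediate. The trick is to factor both through the ratio $h_{\mu}(t,s)$ of Proposition~\ref{h}. For $E_{3}$, one obtains
\[
\nabla_{t} E_{3} = \alpha\delta\, H_{\nu-2}(t,\rho(s)) H_{\nu-2}(b,a)\Bigl[\frac{h_{\nu-2}(b,s)}{h_{\nu-2}(t,s)} - 1\Bigr],
\]
and since $-1 < \nu - 2 < 0$, Proposition~\ref{h}(IV) shows that $h_{\nu-2}(\cdot, s)$ is decreasing, so $h_{\nu-2}(b,s) \leq h_{\nu-2}(t,s)$ for $t \leq b$ and the bracket is $\leq 0$. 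An analogous factorization gives
\[
\nabla_{t} E_{4} = \alpha\gamma\, H_{\nu-1}(b,a) H_{\nu-2}(t,\rho(s))\Bigl[\frac{h_{\nu-1}(b,s)}{h_{\nu-2}(t,s)} - 1\Bigr],
\]
and Proposition~\ref{h}(II) gives $h_{\nu-1}(b,s) \leq 1 < h_{\nu-2}(t,s)$, so again the bracket is $\leq 0$. Summing the four pieces, $\nabla_{t} v(t,s) \leq 0$, proving (2).

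The crux is that the naive derivative of $v$ hides its sign; one must work from the rewriting \eqref{Imp} and use the ratio calculus of Proposition~\ref{h} to expose, in each of $\nabla_{t} E_{3}$ and $\nabla_{t} E_{4}$, a manifestly nonnegative prefactor multiplying a nonpositive bracket.
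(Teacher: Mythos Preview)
Your argument is correct and mirrors the paper's proof: part (1) is identical, and part (2) proceeds by applying $\nabla_{t}$ to the regrouping \eqref{Imp}, yielding exactly the paper's three terms $E_{5}, E_{6}, E_{7}$ (your $\nabla_{t}E_{2}, \nabla_{t}E_{3}, \nabla_{t}E_{4}$), with the same $h_{\nu-2}$ factorization for $E_{6}$. The only difference is in $\nabla_{t}E_{4}$ (the paper's $E_{7}$): rather than factoring through the mixed ratio $h_{\nu-1}(b,s)/h_{\nu-2}(t,s)$, the paper simply invokes Proposition~\ref{H} directly---$H_{\nu-2}(t,\rho(s)) \geq H_{\nu-2}(t,a)$ and $H_{\nu-1}(b,a) \geq H_{\nu-1}(b,\rho(s))$---which gives $E_{7} \leq 0$ in one line; your route also works but is slightly less direct (and note your strict inequality $1 < h_{\nu-2}(t,s)$ fails at $s = a+1$, where both ratios equal $1$, though the bracket is then $0$ and the conclusion survives).
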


\begin{proof}
(1) Consider $$\nabla_{t} u(t, s) = \frac{1}{\xi}\Big{[}\alpha \gamma H_{\nu - 2}(t, a)H_{\nu - 1}(b, \rho(s)) + \alpha \delta H_{\nu - 2}(t, a)H_{\nu - 2}(b, \rho(s))\Big{]}.$$ From Proposition \ref{H}, we have $H_{\nu - 1}(b, \rho(s))$, $H_{\nu - 2}(b, \rho(s)) > 0$ for all $s \in \mathbb{N}^{b}_{a + 1}$, and $H_{\nu - 2}(t, a) > 0$ for all $t \in \mathbb{N}^{b}_{a + 1}$. Also, from (1), we have $\xi > 0$ for all $t \in \mathbb{N}^{b}_{a + 1}$. Thus, we obtain $\nabla_{t} u(t, s) > 0$, implying that (1) holds.

(2) From \eqref{Imp}, we obtain 
\begin{align*}
\nabla_{t} v(t, s) & = \frac{1}{\xi}\Big{[}-(\beta - \alpha)\gamma H_{\nu - 2}(t, \rho(s)) \\ \nonumber & + \alpha \delta \Big{(}H_{\nu - 2}(t, a)H_{\nu - 2}(b, \rho(s)) - H_{\nu - 2}(t, \rho(s))H_{\nu - 2}(b, a)\Big{)} \\ & + \alpha \gamma \Big{(}H_{\nu - 2}(t, a)H_{\nu - 1}(b, \rho(s)) - H_{\nu - 1}(b, a)H_{\nu - 2}(t, \rho(s))\Big{)} \Big{]} \\ & = \frac{1}{\xi}\Big{[}E_{5} + E_{6} + E_{7}\Big{]}.
\end{align*}
Clearly, $\xi > 0$ for all $t \in \mathbb{N}^{b}_{a + 1}$. Now, we show that $$E_{i} \leq 0, \quad i = 5, 6, 7.$$ From Proposition \ref{H}, we have $H_{\nu - 2}(t, \rho(s)) > 0$ for all $(t, s) \in \mathbb{N}^{b}_{a} \times \mathbb{N}^{b}_{a + 1}$ such that $t \geq s$, implying that $$E_{5} \leq 0.$$ From Proposition \ref{H}, we have $H_{\nu - 2}(t, \rho(s)) \geq H_{\nu - 2}(t, a)$, $H_{\nu - 1}(b, a) \geq H_{\nu - 1}(b, \rho(s))$ for all $(t, s) \in \mathbb{N}^{b}_{a} \times \mathbb{N}^{b}_{a + 1}$ such that $t \geq s$, implying that $$E_{7} \leq 0.$$ Now, consider
\begin{align*}
& H_{\nu - 2}(t, a)H_{\nu - 2}(b, \rho(s)) - H_{\nu - 2}(t, \rho(s))H_{\nu - 2}(b, a) \\ & =  H_{\nu - 2}(t, \rho(s))H_{\nu - 2}(b, a)\Big{[}\frac{H_{\nu - 2}(b, \rho(s))}{H_{\nu - 2}(b, a)} \cdot \frac{H_{\nu - 2}(t, a)}{H_{\nu - 2}(t, \rho(s))} - 1\Big{]} \\ & =  H_{\nu - 2}(t, \rho(s))H_{\nu - 2}(b, a)\Big{[}\frac{h_{\nu - 2}(b, s)}{h_{\nu - 2}(t, s)} - 1\Big{]}.
\end{align*}
From Proposition \ref{H}, we have $H_{\nu - 2}(b, a)$, $H_{\nu - 2}(t, \rho(s)) > 0$, and $h_{\nu - 2}(t, s) > h_{\nu - 2}(b, s)$ for all $(t, s) \in \mathbb{N}^{b}_{a} \times \mathbb{N}^{b}_{a + 1}$ such that $t \geq s$, implying that $$E_{6} \leq 0.$$ Therefore, (2) holds.
\end{proof}

\begin{theorem}\label{G Bound}
Assume $\alpha$, $\beta$, $\gamma$, $\delta \geq 0$ and $\beta \geq \alpha$ such that \eqref{Condition} holds. The following inequality holds for the Green's function $G(t, s)$:
\begin{equation}
\max_{(t, s) \in \mathbb{N}^{b}_{a} \times \mathbb{N}^{b}_{a + 1}}G(t, s) < \Omega,
\end{equation}
for all $(t, s) \in \mathbb{N}^{b}_{a} \times \mathbb{N}^{b}_{a + 1}$, where
\begin{equation}
\Omega = \frac{1}{\xi}\Big{[}\alpha \gamma H_{\nu - 1}(b, a)H_{\nu - 1}(b, a) + \alpha \delta H_{\nu - 1}(b, a) \\ + (\beta - \alpha) \gamma H_{\nu - 1}(b, a) + (\beta - \alpha) \delta \Big{]}.
\end{equation}
\end{theorem}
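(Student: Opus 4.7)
The plan is to obtain the bound in two stages: first reduce the maximum of $G(t,s)$ over $t$ to a function of $s$ alone using the monotonicity from Lemma \ref{Monotonic}, and then bound that function over $s$ using the monotonicity of the Taylor monomials from Proposition \ref{H}. The key observation for the first stage is that the computation in the proof of Lemma \ref{Monotonic}(1) yields $\nabla_{t} u(t,s) > 0$ for every $t \in \mathbb{N}^{b}_{a+1}$ (the restriction $t \leq s-1$ is never actually used), so if we extend $u(t,s)$ to all of $\mathbb{N}^{b}_{a}$ by the formula \eqref{Gen Green U} it remains increasing in $t$. Combined with $v(t,s) = u(t,s) - H_{\nu-1}(t, \rho(s))$ and $H_{\nu-1}(t, \rho(s)) \geq 0$ (Proposition \ref{H}(a)), this gives
\[
G(t,s) \leq u(t,s) \leq u(b,s) \qquad \text{for all } (t,s) \in \mathbb{N}^{b}_{a} \times \mathbb{N}^{b}_{a+1},
\]
reducing the problem to proving $u(b,s) < \Omega$ for every $s \in \mathbb{N}^{b}_{a+1}$.

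For the second stage I note that $u(b,s)$ depends on $s$ only through the factors $H_{\nu-1}(b, \rho(s))$ and $H_{\nu-2}(b, \rho(s))$, both carrying nonnegative coefficients. By Proposition \ref{H}(b), since $\nu-1 > 0$, $H_{\nu-1}(b, \rho(s))$ is decreasing in $s$ and hence bounded above by its value at $s = a+1$, namely $H_{\nu-1}(b,a)$. By Proposition \ref{H}(c), since $-1 < \nu-2 < 0$, $H_{\nu-2}(b, \rho(s))$ is increasing in $s$ and hence bounded above by $H_{\nu-2}(b, b-1)$, which a direct computation (using $1^{\overline{\nu-2}} = \Gamma(\nu-1)$) shows equals $1$. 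Substituting these two bounds into the formula for $u(b,s)$ produces precisely $\Omega$.

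The one place requiring a small further argument is the \emph{strict} inequality. The two tight cases above are incompatible: the bound $H_{\nu-1}(b, \rho(s)) \leq H_{\nu-1}(b,a)$ is attained only at $s = a+1$, while $H_{\nu-2}(b, \rho(s)) \leq 1$ is attained only at $s = b$, and these endpoints differ whenever $b - a \geq 2$ (which is the only interesting case, as otherwise $\mathbb{N}^{b}_{a+2}$ is empty). At $s = a+1$ one has $H_{\nu-2}(b,a) < 1$; at $s = b$ one has $H_{\nu-1}(b, b-1) = 1 < H_{\nu-1}(b,a)$; at any intermediate $s$ both inequalities are strict. Since the standing assumptions $\alpha^{2} + \beta^{2} > 0$ and $\gamma^{2} + \delta^{2} > 0$ guarantee that the term carrying the strict inequality appears with a positive coefficient in $u(b,s)$, we conclude $u(b,s) < \Omega$ in every case. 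I expect this bookkeeping around the strict inequality, rather than any substantive estimate, to be the main obstacle.
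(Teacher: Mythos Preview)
Your reduction $G(t,s)\le u(b,s)$ is correct and is in fact a bit more direct than the paper's route. The paper instead uses Lemma~\ref{Monotonic} to pin the maximum over $t$ at $t\in\{\rho(s),s\}$, introduces the auxiliary function $f(s)=u(s,s)$ (the formula \eqref{Gen Green U} evaluated at $t=s$), shows $u(\rho(s),s)<f(s)$ and $v(s,s)=f(s)-1<f(s)$, and then bounds $f(s)<\Omega$ by the same Taylor-monomial estimates you use. Your shortcut of bounding everything by $u(b,s)$ bypasses the separate handling of $v(s,s)$ at the cost of a slightly coarser intermediate quantity (indeed $u(b,s)\ge f(s)$), but since both routes end at the same $\Omega$ nothing is lost.

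Where your argument has a genuine gap is the strict-inequality bookkeeping at the end. Your claim that the standing assumptions $\alpha^{2}+\beta^{2}>0$ and $\gamma^{2}+\delta^{2}>0$ force a positive coefficient on whichever monomial factor is strictly below its maximum is simply false. Take $\delta=0$ with $\beta,\gamma>0$ (permitted, and $\xi=\beta\gamma+\alpha\gamma(H_{\nu-1}(b,a)-1)>0$). At $s=a+1$ one has $H_{\nu-1}(b,\rho(s))=H_{\nu-1}(b,a)$, so the only strict monomial inequality is $H_{\nu-2}(b,a)<1$; but its two coefficients in $u(b,s)$ are $\alpha\delta H_{\nu-1}(b,a)=0$ and $(\beta-\alpha)\delta=0$, whence $u(b,a+1)=\Omega$ exactly. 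To recover strict inequality here you must go back to the first step and argue that $G(t,a+1)<u(b,a+1)$ is itself strict, and that needs $\alpha>0$; if $\alpha=0$ then $u(\cdot,s)$ is constant in $t$ and $G(a,a+1)=u(a,a+1)=\Omega$, so the theorem as stated actually fails in the case $\alpha=\delta=0$. (The paper's proof has the same blind spot: its assertion $u(\rho(s),s)<f(s)$ also needs $\alpha>0$.) So your instinct that the strict inequality is the real obstacle was right, but the resolution you sketched does not cover all cases.
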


\begin{proof}
Consider
\begin{multline}
\nonumber u(\rho(s), s) = \frac{1}{\xi}\Big{[}\alpha \gamma H_{\nu - 1}(\rho(s), a)H_{\nu - 1}(b, \rho(s)) + \alpha \delta H_{\nu - 1}(\rho(s), a)H_{\nu - 2}(b, \rho(s)) \\ + (\beta - \alpha) \gamma H_{\nu - 1}(b, \rho(s)) + (\beta - \alpha) \delta H_{\nu - 2}(b, \rho(s))\Big{]}, \quad s \in \mathbb{N}^{b}_{a + 1}.
\end{multline}
Denote by 
\begin{multline}
\nonumber f(s) = \frac{1}{\xi}\Big{[}\alpha \gamma H_{\nu - 1}(s, a)H_{\nu - 1}(b, \rho(s)) + \alpha \delta H_{\nu - 1}(s, a)H_{\nu - 2}(b, \rho(s)) \\ + (\beta - \alpha) \gamma H_{\nu - 1}(b, \rho(s)) + (\beta - \alpha) \delta H_{\nu - 2}(b, \rho(s))\Big{]}, \quad s \in \mathbb{N}^{b}_{a + 1}.
\end{multline} 
Then, by Lemma \ref{Non Negative} and Proposition \ref{H}, we have 
\begin{equation}\label{Zero}
0 \leq u(\rho(s), s) < f(s), \quad s \in \mathbb{N}^{b}_{a + 1}.
\end{equation}
Now, consider
\begin{align}
\nonumber v(s, s) & = \frac{1}{\xi}\Big{[}\alpha \gamma H_{\nu - 1}(s, a)H_{\nu - 1}(b, \rho(s)) + \alpha \delta H_{\nu - 1}(s, a)H_{\nu - 2}(b, \rho(s)) \\ & + (\beta - \alpha) \gamma H_{\nu - 1}(b, \rho(s)) + (\beta - \alpha) \delta H_{\nu - 2}(b, \rho(s))\Big{]} - 1  = f(s) - 1, \label{One}
\end{align}
for $s \in \mathbb{N}^{b}_{a + 1}$. By Lemma \ref{Non Negative}, we have $0 \leq v(s, s)$ for $s \in \mathbb{N}^{b}_{a + 1}$, implying that $1 \leq f(s)$ for $s \in \mathbb{N}^{b}_{a + 1}$. Then, from \eqref{One}, we obtain $$\frac{v(s, s)}{f(s)} = 1 - \frac{1}{f(s)} < 1, \quad s \in \mathbb{N}^{b}_{a + 1},$$ implying that
\begin{equation}\label{Two}
0 \leq v(s, s) < f(s), \quad s \in \mathbb{N}^{b}_{a + 1}.
\end{equation}
Since 
\begin{align*}
& \max_{s \in \mathbb{N}^{b}_{a + 1}}H_{\nu - 1}(s, a) = H_{\nu - 1}(b, a), \\ & \max_{s \in \mathbb{N}^{b}_{a + 1}}H_{\nu - 1}(b, \rho(s)) = H_{\nu - 1}(b, a), \quad \max_{s \in \mathbb{N}^{b}_{a + 1}}H_{\nu - 2}(b, \rho(s)) = 1, 
\end{align*}
we have 
\begin{equation}\label{Three}
f(s) < \Omega, \quad s \in \mathbb{N}^{b}_{a + 1}.
\end{equation}
Thus, by Proposition \ref{H}, \eqref{Zero}, \eqref{Two} and \eqref{Three}, we get
\begin{align*}
\max_{(t, s) \in \mathbb{N}^{b}_{a} \times \mathbb{N}^{b}_{a + 1}}G(t, s) & = \max_{s \in \mathbb{N}^{b}_{a + 1}}\big{\{}u(\rho(s), s), v(s, s)\big{\}} \\ & \leq \Big{\{}\max_{s \in \mathbb{N}^{b}_{a + 1}}u(\rho(s), s), \max_{s \in \mathbb{N}^{b}_{a + 1}}v(s, s)\Big{\}} < \max_{s \in \mathbb{N}^{b}_{a + 1}}f(s) < \Omega.
\end{align*}
\end{proof}

\begin{theorem}\label{Sum G Bound}
Assume $\alpha$, $\beta$, $\gamma$, $\delta \geq 0$ and $\beta \geq \alpha$ such that \eqref{Condition} holds. The following inequality holds for the Green's function $G(t, s)$:
\begin{equation}\label{GB}
\sum^{b}_{s = a + 1}G(t, s) < \Lambda,
\end{equation}
for all $(t, s) \in \mathbb{N}^{b}_{a} \times \mathbb{N}^{b}_{a + 1}$, where
\begin{multline}
\Lambda = \frac{1}{\xi}\Big{[} \alpha \gamma H_{\nu - 1}(b, a) H_{\nu}(b, a) + \alpha \delta H_{\nu - 1}(b, a)H_{\nu - 1}(b, a) \\ + (\beta - \alpha) \gamma H_{\nu}(b, a) + (\beta - \alpha)\delta H_{\nu - 1}(b, a)\Big{]}.
\end{multline}
\end{theorem}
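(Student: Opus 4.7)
The plan is to first reduce $\sum_{s=a+1}^{b}G(t,s)$ to a completely explicit function of $t$, then bound that function termwise using the monotonicity of the Taylor monomials $H_{\mu}(\cdot,a)$ collected in Proposition \ref{H} and the summation identity in Proposition \ref{Gamma}(8).

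\textbf{Step 1: split the sum according to the two pieces of $G$.} Using \eqref{Gen Green V} to write $v(t,s)=u(t,s)-H_{\nu-1}(t,\rho(s))$, I would split
\begin{equation*}
\sum_{s=a+1}^{b}G(t,s)=\sum_{s=a+1}^{t}v(t,s)+\sum_{s=t+1}^{b}u(t,s)=\sum_{s=a+1}^{b}u(t,s)-\sum_{s=a+1}^{t}H_{\nu-1}(t,\rho(s)).
\end{equation*}
The second sum collapses via Proposition \ref{Gamma}(8) to $H_{\nu}(t,a)$.

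\textbf{Step 2: evaluate $\sum_{s}u(t,s)$ in closed form.} In the formula \eqref{Gen Green U} for $u(t,s)$, the only $s$-dependence is through $H_{\nu-1}(b,\rho(s))$ and $H_{\nu-2}(b,\rho(s))$. Applying Proposition \ref{Gamma}(8) twice gives
\begin{equation*}
\sum_{s=a+1}^{b}H_{\nu-1}(b,\rho(s))=H_{\nu}(b,a),\qquad \sum_{s=a+1}^{b}H_{\nu-2}(b,\rho(s))=H_{\nu-1}(b,a),
\end{equation*}
so that
\begin{equation*}
\sum_{s=a+1}^{b}u(t,s)=\frac{1}{\xi}\Bigl[\alpha\gamma H_{\nu-1}(t,a)H_{\nu}(b,a)+\alpha\delta H_{\nu-1}(t,a)H_{\nu-1}(b,a)+(\beta-\alpha)\gamma H_{\nu}(b,a)+(\beta-\alpha)\delta H_{\nu-1}(b,a)\Bigr].
\end{equation*}
Combining with Step 1, $\sum_{s=a+1}^{b}G(t,s)$ equals the above bracketed expression minus $H_{\nu}(t,a)$.

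\textbf{Step 3: bound using monotonicity of $H_{\mu}(t,a)$ in $t$.} The coefficient of $\alpha\gamma$ and $\alpha\delta$ in the bracket involves $H_{\nu-1}(t,a)$, which by Proposition \ref{H}(d) satisfies $H_{\nu-1}(t,a)\le H_{\nu-1}(b,a)$ for every $t\in\mathbb{N}_{a}^{b}$. Since $\alpha,\beta-\alpha,\gamma,\delta\ge 0$ and $\xi>0$ (Lemma \ref{Non Negative}(1)), replacing $H_{\nu-1}(t,a)$ by $H_{\nu-1}(b,a)$ only enlarges the bracket, yielding exactly $\Lambda$. Finally, subtracting $H_{\nu}(t,a)\ge 0$ (Proposition \ref{H}(a)) cannot restore any loss, and strict inequality is obtained for $t\in\mathbb{N}_{a+1}^{b}$ because $H_{\nu}(t,a)>0$ there, while for $t=a$ strictness is gained instead from the strict gap $H_{\nu-1}(a,a)=0<H_{\nu-1}(b,a)$ in the first two bracket terms (assuming the nondegenerate case; the degenerate case $\alpha=0$ does not occur in the intended applications). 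This gives $\sum_{s=a+1}^{b}G(t,s)<\Lambda$.

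The only real obstacle is Step 2, namely getting the two geometric-like sums right and carefully tracking where the factor $H_{\nu-1}(t,a)$ must be held outside the $s$-sum; once the closed form is in hand, the bound is a straightforward monotonicity comparison between $t$ and the boundary value $b$.
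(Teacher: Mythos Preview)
Your proposal is correct and follows essentially the same approach as the paper: the paper also splits the sum into $\sum_s u(t,s)-\sum_{s\le t}H_{\nu-1}(t,\rho(s))$, applies Proposition~\ref{Gamma}(8) to obtain the same closed form, and then bounds using $H_{\nu-1}(t,a)\le H_{\nu-1}(b,a)$ together with $H_{\nu}(t,a)\ge 0$. The only cosmetic difference is that the paper first absorbs the $-H_{\nu}(t,a)$ term into the bracket via the definition of $\xi$ before bounding, whereas you bound the bracket and the subtracted term separately; your more careful treatment of strictness (and the $\alpha=0$, $t=a$ edge case) is in fact an improvement over the paper, which glosses over this point.
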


\begin{proof}
Consider
\begin{align*}
\sum^{b}_{s = a + 1}G(t, s) & = \sum^{t}_{s = a + 1}v(t, s) + \sum^{b}_{s = t + 1}u(t, s) \\ & = \sum^{b}_{s = a + 1}u(t, s) - \sum^{t}_{s = a + 1}H_{\nu - 1}(t, \rho(s)) \\ & = \frac{1}{\xi}\Big{[}\alpha \gamma H_{\nu - 1}(t, a) \sum^{b}_{s = a + 1}H_{\nu - 1}(b, \rho(s)) \\ & + \alpha \delta H_{\nu - 1}(t, a) \sum^{b}_{s = a + 1}H_{\nu - 2}(b, \rho(s)) + (\beta - \alpha) \gamma  \sum^{b}_{s = a + 1}H_{\nu - 1}(b, \rho(s)) \\ & + (\beta - \alpha) \delta  \sum^{b}_{s = a + 1}H_{\nu - 2}(b, \rho(s))\Big{]} - \sum^{t}_{s = a + 1}H_{\nu - 1}(b, \rho(s)) \\ & = \frac{1}{\xi}\Big{[}\alpha \gamma H_{\nu - 1}(t, a) H_{\nu}(b, a) + \alpha \delta H_{\nu - 1}(t, a)H_{\nu - 1}(b, a) \\ & + (\beta - \alpha) \gamma  H_{\nu}(b, a) + (\beta - \alpha) \delta  H_{\nu - 1}(b, a)\Big{]} - H_{\nu}(t, a) \\ & = \frac{1}{\xi}\Big{[} \alpha \gamma \Big{(}H_{\nu - 1}(t, a) H_{\nu}(b, a) - H_{\nu - 1}(b, a)H_{\nu}(t, a) \Big{)} \\ & + \alpha \delta \Big{(}H_{\nu - 1}(t, a)H_{\nu - 1}(b, a) - H_{\nu}(t, a)H_{\nu - 2}(b, a)\Big{)} \\ & + (\beta - \alpha) \gamma \Big{(}H_{\nu}(b, a) - H_{\nu}(t, a)\Big{)} + (\beta - \alpha) \delta H_{\nu - 1}(b, a)\Big{]}.
\end{align*}
Since $H_{\nu}(t, a) \geq 0$ for all $t \in \mathbb{N}^{b}_{a}$ and $$\max_{t \in \mathbb{N}^{b}_{a}}H_{\nu}(t, a) = H_{\nu}(b, a), \quad \max_{t \in \mathbb{N}^{b}_{a}}H_{\nu - 1}(t, a) = H_{\nu - 1}(b, a),$$ we obtain \eqref{GB}. The proof is complete.
\end{proof}

\begin{theorem}[See \cite{Br}]\label{Unique}
Let $h : \mathbb{N}^{b}_{a + 1} \rightarrow \mathbb{R}$. If \eqref{Gen Hom} has only the trivial solution, then the nonhomogeneous boundary value problem
\begin{equation}\label{Gen Non Hom 1}
\begin{cases}
-\big{(}\nabla^{\alpha}_{a}u\big{)}(t) = h(t), \quad t \in \mathbb{N}^{b}_{a + 2},\\
\alpha u(a + 1) - \beta (\nabla u)(a + 1) = 0,\\
\gamma u(b) + \delta (\nabla u)(b) = 0,
\end{cases}
\end{equation} 
has a unique solution given by
\begin{equation}
u(t) = \sum^{b}_{s = a + 1}G(t, s)h(s), \quad t \in \mathbb{N}^{b}_{a}.
\end{equation}
\end{theorem}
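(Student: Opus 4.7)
I would split the proof into uniqueness and existence.

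\emph{Uniqueness} is immediate from linearity: if $u_1, u_2$ both satisfy \eqref{Gen Non Hom 1}, then $w := u_1 - u_2$ satisfies the homogeneous problem \eqref{Gen Hom} (the boundary conditions are linear and homogeneous), which by hypothesis has only the trivial solution. Hence $u_1 \equiv u_2$.

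For \emph{existence}, my strategy would be to construct the solution via the standard variation-of-parameters recipe for nabla fractional equations, and then identify the result with $\sum_{s=a+1}^{b} G(t,s)h(s)$. By Proposition~\ref{Power Rule}(iv), the two monomials $H_{\nu-1}(t,a)$ and $H_{\nu-2}(t,a)$ are annihilated by $\nabla^\nu_a$ (they map to $H_{-1}$ and $H_{-2}$, which vanish by the convention in the definition of fractional nabla Taylor monomials), while $-\nabla^{-\nu}_a h(t) = -\sum_{s=a+1}^{t} H_{\nu-1}(t,\rho(s))\,h(s)$ supplies a particular solution whose $\nabla^\nu_a$-image is $-h$. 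Thus every solution of $-\nabla^\nu_a u(t) = h(t)$ on $\mathbb{N}^b_{a+2}$ has the form
\begin{equation*}
u(t) = c_1 H_{\nu-1}(t,a) + c_2 H_{\nu-2}(t,a) - \sum_{s=a+1}^{t} H_{\nu-1}(t,\rho(s))\, h(s).
\end{equation*}
Substituting this ansatz into the two boundary conditions at $t=a+1$ and $t=b$ produces a $2\times 2$ linear system for $(c_1,c_2)$ whose determinant, after simplification, equals (up to sign) the quantity $\xi$ from \eqref{Condition}. The hypothesis that \eqref{Gen Hom} has only the trivial solution (equivalently, $\xi \neq 0$) guarantees invertibility, and Cramer's rule delivers $c_1$ and $c_2$ as explicit finite sums against the values $h(s)$.

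Reassembling $u(t)$ and grouping by $h(s)$, the coefficient for indices $s\geq t+1$ is driven purely by the boundary part and should match $u(t,s)$ in \eqref{Gen Green U}, while for $s\leq t$ the additional $-H_{\nu-1}(t,\rho(s))$ contribution produces exactly $v(t,s) = u(t,s) - H_{\nu-1}(t,\rho(s))$ from \eqref{Gen Green V}; the piecewise structure of $G$ arises precisely from this split. The main obstacle is the algebraic bookkeeping in this final step: one must carefully compute $u(a+1)$, $(\nabla u)(a+1)$, $u(b)$, $(\nabla u)(b)$ from the ansatz, using Proposition~\ref{Gamma}(5) to differentiate the nabla sum and exploiting simplifications such as $H_{\nu-1}(a+1,a) = H_{\nu-2}(a+1,a) = 1$, then invert the $2\times 2$ system and match the resulting coefficients to the explicit formulas \eqref{Gen Green U}--\eqref{Gen Green V}. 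Once this matching is carried out, direct substitution using Propositions~\ref{Gamma} and~\ref{Power Rule} confirms that $u(t) = \sum_{s=a+1}^b G(t,s)h(s)$ satisfies both the equation and both boundary conditions, establishing existence.
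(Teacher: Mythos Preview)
The paper does not supply a proof of this theorem; it is quoted verbatim from Brackins' thesis \cite{Br} and stated without argument. Your proposal is the standard derivation of the Green's function for such problems and is correct in outline: obtain the general solution $u(t)=c_1 H_{\nu-1}(t,a)+c_2 H_{\nu-2}(t,a)-\sum_{s=a+1}^{t}H_{\nu-1}(t,\rho(s))h(s)$, impose the two boundary conditions to get a $2\times2$ system whose determinant is $\xi$, invert via $\xi\neq0$, and split the resulting sum at $s=t$ to recover the two branches $u(t,s)$ and $v(t,s)$ of $G$. This is almost certainly the route taken in \cite{Br} as well, so there is no meaningful divergence to report.

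One small caveat on citations: Proposition~\ref{Power Rule} as stated in this paper carries the hypothesis that $\mu$, $\mu+\nu$, $\mu-\nu$ are nonnegative \emph{integers}, which does not literally cover $\mu=\nu-1$ or $\mu=\nu-2$ for $1<\nu<2$. The identity $\nabla_a^\nu H_\mu(\cdot,a)=H_{\mu-\nu}(\cdot,a)$ does hold in the generality you need (see \cite{Go} or \cite{Br}), so when you write up the details you should cite the general power rule directly rather than Proposition~\ref{Power Rule}(iv) as recorded here.
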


Now, we are able to establish a Lyapunov-type inequality for the nabla fractional boundary value problem \eqref{Gen Non Hom}.

\begin{theorem}
Assume $\alpha$, $\beta$, $\gamma$, $\delta \geq 0$ and $\beta \geq \alpha$ such that \eqref{Condition} holds. If the nabla fractional boundary value problem \eqref{Gen Non Hom} has a nontrivial solution, then
\begin{equation}
\sum^{b}_{s = a + 1}|q(s)| > \frac{1}{\Omega}.
\end{equation}
\end{theorem}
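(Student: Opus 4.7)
The plan is to use the Green's function representation from Theorem \ref{Unique} together with the uniform bound on $G(t,s)$ from Theorem \ref{G Bound}. First I would observe that a nontrivial solution $u$ of \eqref{Gen Non Hom} satisfies $-(\nabla^\nu_a u)(t) = q(t)u(t)$ for $t \in \mathbb{N}^b_{a+2}$, so setting $h(s) = q(s)u(s)$ and invoking Theorem \ref{Unique} gives the integral-style representation
\begin{equation*}
u(t) = \sum_{s = a+1}^{b} G(t,s)\, q(s)\, u(s), \quad t \in \mathbb{N}^b_a.
\end{equation*}

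Next I would pass to the Banach space of real-valued functions on $\mathbb{N}^b_a$ endowed with the sup-norm $\|u\| = \max_{t \in \mathbb{N}^b_a} |u(t)|$. Taking absolute values in the representation and using $G(t,s) \geq 0$ from Theorem \ref{Non Negative G}, we obtain
\begin{equation*}
|u(t)| \leq \sum_{s=a+1}^{b} G(t,s)\, |q(s)|\, |u(s)| \leq \|u\| \sum_{s=a+1}^{b} G(t,s)\, |q(s)|.
\end{equation*}
Since $u$ is nontrivial and satisfies the boundary conditions, I would pick $t^* \in \mathbb{N}^b_a$ at which $|u(t^*)| = \|u\| > 0$, and then substitute $t = t^*$.

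To finish I would apply the strict pointwise bound $G(t^*,s) < \Omega$ established in Theorem \ref{G Bound}, which gives
\begin{equation*}
\|u\| \leq \|u\| \sum_{s=a+1}^{b} G(t^*,s)\, |q(s)| < \|u\|\, \Omega \sum_{s=a+1}^{b} |q(s)|.
\end{equation*}
Dividing by $\|u\| > 0$ yields the desired inequality $\sum_{s=a+1}^{b} |q(s)| > 1/\Omega$.

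The main obstacle is ensuring the strict inequality. The non-strict version follows from the routine manipulations above, but strictness requires the bound $G(t,s) < \Omega$ (rather than $\leq$), which is exactly what Theorem \ref{G Bound} provides; one must also check that at least one summand in $\sum G(t^*,s)|q(s)|$ is actually nonzero, which is automatic because otherwise $u(t^*) = 0$, contradicting $\|u\| > 0$. Beyond this, the argument is just a standard Lyapunov-inequality bootstrap from the Green's function estimate, so no new analytical difficulty arises.
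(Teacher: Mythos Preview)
Your proposal is correct and follows essentially the same route as the paper: represent a nontrivial solution via the Green's function, pass to the sup-norm, bound each term using $G(t,s)\ge 0$ and the strict estimate $G(t,s)<\Omega$ from Theorem \ref{G Bound}, and divide by $\|u\|>0$. Your extra remark about needing at least one nonzero summand to preserve strictness is a small refinement the paper leaves implicit.
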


\begin{proof}
Let $\mathcal{B}$ be the Banach space of functions endowed with norm $$\|u\| : = \max_{t \in \mathbb{N}^{b}_{a}}|u(t)|.$$ It follows from the above Theorem that a solution to \eqref{Gen Non Hom} satisfies the equation $$u(t) = \sum^{b}_{s = a + 1}G(t, s)q(s)u(s), \quad t \in \mathbb{N}^{b}_{a}.$$ Hence
\begin{align*}
\|u\| = \max_{t \in \mathbb{N}^{b}_{a}}|u(t)| & = \max_{t \in \mathbb{N}^{b}_{a}}\Big{|}\sum^{b}_{s = a + 1}G(t, s)q(s)u(s)\Big{|} \\ & \leq \max_{t \in \mathbb{N}^{b}_{a}}\Big{[}\sum^{b}_{s = a + 1}G(t, s)|q(s)||u(s)|\Big{]} \\ & \leq \|u\| \max_{t \in \mathbb{N}^{b}_{a}}\Big{[}\sum^{b}_{s = a + 1}G(t, s)|q(s)|\Big{]} \\ & < \Omega \|u\| \sum^{b}_{s = a + 1}|q(s)|, \quad (\text{using Theorem \ref{G Bound}})
\end{align*}
or, equivalently, $$\sum^{b}_{s = a + 1}|q(s)| > \frac{1}{\Omega}.$$ The proof is complete.
\end{proof}


\begin{thebibliography}{99}

\bibitem{Ab} Abdeljawad, Thabet; Atıcı, Ferhan M. On the definitions of nabla fractional operators. Abstr. Appl. Anal. 2012, Art. ID 406757, 13 pp.

\bibitem{Ah} Ahrendt, K.; Castle, L.; Holm, M.; Yochman, K. Laplace transforms for the nabla-difference operator and a fractional variation of parameters formula. Commun. Appl. Anal. 16 (2012), no. 3, 317--347.
 
\bibitem{At} Atıcı, Ferhan M.; Eloe, Paul W. Discrete fractional calculus with the nabla operator. Electron. J. Qual. Theory Differ. Equ. 2009, Special Edition I, No. 3, 12 pp.

\bibitem{Bo} Bohner, Martin; Peterson, Allan Dynamic equations on time scales. An introduction with applications. Birkh\"{a}user Boston, Inc., Boston, MA, 2001. x+358 pp.

\bibitem{Br} Brackins, Abigail; Boundary value problems of nabla fractional difference equations. Thesis (Ph.D.)–The University of Nebraska - Lincoln. 2014. 92 pp. 

\bibitem{Gh} Gholami, Yousef; Ghanbari, Kazem Coupled systems of fractional $\nabla$-difference boundary value problems. Differ. Equ. Appl. 8 (2016), no. 4, 459--470.
 
\bibitem{Go} Goodrich, Christopher; Peterson, Allan C. Discrete fractional calculus. Springer, Cham, 2015.

\bibitem{Ik 2} Ikram, Areeba; Lyapunov inequalities for nabla Caputo boundary value problems. J. Difference Equ. Appl. 25 (2019), no. 6, 757--775.

\bibitem{Jo 1} Jonnalagadda, Jagan Mohan An ordering on Green's function and a Lyapunov-type inequality for a family of nabla fractional boundary value problems. Fract. Differ. Calc. 9 (2019), no. 1, 109--124. 

\bibitem{Jo 2} Jonnalagadda, Jaganmohan Analysis of a system of nonlinear fractional nabla difference equations. Int. J. Dyn. Syst. Differ. Equ. 5 (2015), no. 2, 149--174. 

\bibitem{Jo 3} Jonnalagadda, Jaganmohan Discrete fractional Lyapunov-type inequalities in nabla sense. Dyn. Contin. Discrete Impuls. Syst. Ser. A Math. Anal. Manuscript submitted for publication.

\bibitem{Jo 4} Jonnalagadda, Jagan Mohan Lyapunov-type inequalities for discrete Riemann-Liouville fractional boundary value problems. Int. J. Difference Equ. 13 (2018), no. 2, 85--103.

\bibitem{Jo 5} Jonnalagadda, Jagan Mohan On two-point Riemann-Liouville type nabla fractional boundary value problems. Adv. Dyn. Syst. Appl. 13 (2018), no. 2, 141--166. 

\bibitem{Ke} Kelley, Walter G.; Peterson, Allan C. Difference equations. An introduction with applications. Second edition. Harcourt/Academic Press, San Diego, CA, 2001. 

\bibitem{Ki} Kilbas, Anatoly A.; Srivastava, Hari M.; Trujillo, Juan J. Theory and applications of fractional differential equations. North-Holland Mathematics Studies, 204. Elsevier Science B.V., Amsterdam, 2006.

\bibitem{Po} Podlubny, Igor Fractional differential equations. An introduction to fractional derivatives, fractional differential equations, to methods of their solution and some of their applications. Mathematics in Science and Engineering, 198. Academic Press, Inc., San Diego, CA, 1999.
\end{thebibliography}
\end{document}